\newcommand{\supplementary}{Appendix~\ref{app:pack}}
\newcommand{\appsection}[1]{\subsection{#1}}
\newcommand{\appSection}{Subsection}
\newcommand{\appref}[3]{#2~\ref{#1}}
\newcommand{\appeqref}[2]{\eqref{#1}}
\newtheorem{definition}{Definition}
\newtheorem{theorem}[definition]{Theorem}
\newtheorem{lemma}[definition]{Lemma}
\theoremstyle{definition}
\newcommand{\dateline}[3]{}
\newcommand{\MSC}[1]{\newcommand{\msc}{#1}}
\newcommand{\Copyright}[1]{\newcommand{\license}{#1}}
\newcommand{\datemsc}{\date{\today\\ \small Mathematics Subject Classification: \msc \\ \copyright\thinspace\ignorespaces\license}}
\newcommand{\arxiv}[1]{\href{http://arxiv.org/abs/#1}{\texttt{arXiv:#1}}}
\definecolor{deepblue}{rgb}{0,0,0.5}
\definecolor{deepred}{rgb}{0.6,0,0}
\definecolor{deepgreen}{rgb}{0,0.5,0}
\DeclareFixedFont{\ttb}{T1}{txtt}{bx}{n}{10} 
\DeclareFixedFont{\ttm}{T1}{txtt}{m}{n}{10}  
\newcommand{\pythonstyle}{\lstset{
language=Python,
basicstyle=\ttm,
otherkeywords={self},             
keywordstyle=\ttb\color{deepblue},
emph={sage,__init__,True,False,None},  
emphstyle=\ttb\color{deepred},    
stringstyle=\color{deepgreen},
frame=tb,                         
showstringspaces=false
}}
\newcommand\pythoninline[1]{{\pythonstyle\lstinline!#1!}}
\newcolumntype{x}[1]{%
>{\centering\arraybackslash}m{#1}}%
\newcommand{\noop}[1]{}
\newcommand{\G}{\ensuremath{\Gamma}}
\newcommand{\M}{\ensuremath{\mathcal{M}}}
\newcommand{\sV}[2]{{
	\setlength{\arraycolsep}{2pt}
	\renewcommand{\arraystretch}{0.8}
	\left[\begin{array}{ccc} #1 \\ #2 \end{array}\right]
}}
\begin{document}

\dateline{Mar 30, 2018}{Sep 27, 2018}{Oct 19, 2018}
\MSC{05E30}
\Copyright{Janoš Vidali. Released under the CC BY license (International 4.0).}


\title{Using symbolic computation\\
to prove nonexistence of distance-regular graphs}
\author{Janoš Vidali%
\thanks{This work is supported in part
by the Slovenian Research Agency (research program P1-0285).}
\\
\small Faculty of Mathematics and Physics\\[-0.8ex]
\small University of Ljubljana, 1000 Ljubljana, Slovenia\\
\small\tt janos.vidali@fmf.uni-lj.si}
\datemsc

\maketitle

{
\begin{abstract}
\setlength{\abovedisplayskip}{0pt}
\setlength{\belowdisplayskip}{3pt}
A package for the Sage computer algebra system is developed
for checking feasibility of a given intersection array
for a distance-regular graph.
We use this tool to show that
there is no distance-regular graph
with intersection array
\begin{multline*}
\{(2r+1)(4r+1)(4t-1), 8r(4rt-r+2t), (r+t)(4r+1); \\[-3pt]
1, (r+t)(4r+1), 4r(2r+1)(4t-1)\} \quad (r, t \ge 1),
\end{multline*}
$\{135,\! 128,\! 16; 1,\! 16,\! 120\}$, $\{234,\! 165,\! 12; 1,\! 30,\! 198\}$
or $\{55,\! 54,\! 50,\! 35,\! 10; 1,\! 5,\! 20,\! 45,\! 55\}$.
In all cases, the proofs rely on equality in the Krein condition,
from which triple intersection numbers are determined.
Further combinatorial arguments are then used to derive nonexistence.

\bigskip\noindent \textbf{Keywords:} distance-regular graphs,
Krein parameters, triple intersection numbers, nonexistence,
symbolic computation
\end{abstract}
}

\section{Introduction}\label{sec:int}

Distance-regular graphs were introduced around 1970 by N.~Biggs~\cite{b71}.
As they are intimately linked to many other combinatorial objects,
such as finite simple groups, finite geometries, and codes,
a natural goal is trying to classify them.

Many distance-regular graphs are known,
however constructing new ones has proved to be a difficult task.
A number of feasibility conditions
for distance-regular graphs have been found,
which allows us to compile a list of feasible intersection arrays
for small distance-regular graphs
(or related structures, such as $Q$-polynomial association schemes),
see Brouwer et al.~\cite{b11,bcn89,bcn94} and Williford~\cite{w17a}.
However, feasibility is no guarantee for existence,
so proofs of nonexistence of distance-regular graphs
with feasible intersection arrays
are also a contribution to the classification.
In certain cases,
single intersection arrays have been ruled out~\cite{k92,l93},
while other proofs may show nonexistence for a whole infinite family
of feasible intersection arrays~\cite{cj08,jv12,u12}.
In this paper we give proofs of nonexistence for distance-regular graphs
belonging to a two-parameter infinite family,
as well as for graphs with intersection arrays
\begin{align*}
\{135, 128, 16;&\ 1, 16, 120\}, \\
\{234, 165, 12;&\ 1, 30, 198\}, \\
\{55, 54, 50, 35, 10;&\ 1, 5, 20, 45, 55\}.
\end{align*}

We develop a package called {\tt sage-drg}~\cite{v18}
for the Sage computer algebra system~\cite{sage17}.
Sage is free open-source software
written in the Python programming language~\cite{p17},
with many functionalities deriving from other free open-source software,
such as Maxima~\cite{maxima17}, which Sage uses for symbolic computation.
The {\tt sage-drg} package is thus also free open-source software
available under the MIT license,
written in the Python programming language,
making use of the Sage library.
The package can be used to check for feasibility
of a given intersection array against known feasibility conditions,
see Van Dam, Koolen and Tanaka for an up-to-date survey~\cite{vdkt16}.
Furthermore, using equality in the Krein condition
(see Theorem~\ref{thm:krein0}),
restrictions on triple intersection numbers can be derived.
In this paper, we use them to derive some nonexistence results.
The {\tt sage-drg} package also includes Jupyter notebooks
demonstrating its use to obtain these results,
as well as the notebook
\href{https://github.com/jaanos/sage-drg/blob/master/jupyter/Demo.ipynb}{\tt jupyter/Demo.ipynb}
giving some general examples of use of the package.
A more detailed description of the {\tt sage-drg} package
is given in \supplementary.

The results from Sections~\ref{sec:2param},~\ref{sec:135} and~\ref{sec:5554}
appeared in the author's PhD thesis~\cite{v13},
where computation was done using a Mathematica~\cite{w10} notebook
originally developed by M.~Ur\-lep.
Thus, the {\tt sage-drg} package can be seen as a move
from closed-source proprietary software to free open-source software,
which allows one to check all code for correctness,
thus making the results verifiable.

\section{Preliminaries}\label{sec:pre}

In this section we review some basic definitions and concepts.
See Brouwer, Cohen and Neumaier~\cite{bcn89} for further details.

Let $\G$ be a connected graph with diameter $d$ and $n$ vertices,
and let $\partial(u, v)$ denote
the distance between the vertices $u$ and $v$ of $\G$.
The graph $\G$ is {\em distance-regular}
if there exist constants $p^h_{ij}$ ($0 \le h, i, j \le d$),
called the {\em intersection numbers},
such that for any pair of vertices $u, v$ at distance $h$,
there are precisely $p^h_{ij}$ vertices
at distances $i$ and $j$ from $u$ and $v$, respectively.
In fact, all intersection numbers can be computed
given only the intersection numbers
$b_i = p^i_{1,i+1}$ and $c_{i+1} = p^{i+1}_{1,i}$ ($0 \le i \le d-1$)~%
\cite[\S4.1A]{bcn89}.
These intersection numbers are usually gathered in the
{\em intersection array} $\{b_0, b_1, \dots, b_{d-1}; c_1, c_2, \dots, c_d\}$.
We also define the {\em valency} $k = b_0$
and $a_i = k - b_i - c_i$ ($0 \le i \le d$), where $b_d = c_0 = 0$.
A connected noncomplete {\em strongly regular} graph
with parameters $(v, k, \lambda, \mu)$
is a distance-regular graph of diameter $2$ with $v$ vertices,
valency $k$ and intersection numbers $a_1 = \lambda$, $c_2 = \mu$.

Let $A_i$ ($0 \le i \le d$) be a binary square matrix
indexed with the vertices of a graph $\G$ of diameter $d$,
with entry corresponding to vertices $u$ and $v$ equal to $1$
precisely when $\partial(u, v) = i$.
The matrix $A = A_1$ is the {\em adjacency matrix} of $\G$.
The graph $\G$ is called {\em primitive}
if all $A_i$ ($1 \le i \le d$) are adjacency matrices of connected graphs.
A distance-regular graph of valency $k \ge 3$ that is not primitive
is bipartite or antipodal (or both)~\cite[Thm.~4.2.1]{bcn89}.
The spectrum of $\G$ is defined to be the spectrum of $A$
(i.e., eigenvalues with multiplicities)
and can be computed directly from the intersection array of $\G$~%
\cite[\S4.1B]{bcn89}.

Suppose that $\G$ is distance-regular.
Let $\M$ be the {\em Bose-Mesner} algebra,
i.e., the algebra generated by $A$.
The matrices $\{A_i\}_{i=0}^d$ form a basis of $\M$,
which also has a second basis $\{E_i\}_{i=0}^d$
consisting of projectors to the eigenspaces of $A$~\cite[\S2.2]{bcn89}.
Note that the indexing in this second basis
depends on the ordering of eigenvalues.
The descending ordering of eigenvalues is known as the {\em natural ordering}.
We define the {\em eigenmatrix} $P$ and {\em dual eigenmatrix} $Q$
as $(d+1) \times (d+1)$ matrices such that
$A_j = \sum_{i=0}^d P_{ij} E_i$ and $E_j = n^{-1} \sum_{i=0}^d Q_{ij} A_i$.
The graph $\G$ is called {\em formally self-dual}~\cite[p.~49]{bcn89}
if $P = Q$ holds for some ordering of eigenvalues.
The graph $\G$ is called {\em $Q$-polynomial}~\cite[\S2.7]{bcn89}
with respect to some ordering of eigenvalues
if there exist real numbers $z_0, \dots, z_d$
and polynomials $q_j$ of degree $j$
such that $Q_{ij} = q_j(z_i)$ ($0 \le i, j \le d$).
Finally, we define the {\em Krein parameters} $q^h_{ij}$~\cite[\S2.3]{bcn89}
as such numbers that $E_i \circ E_j = n^{-1} \sum_{h=0}^d q^h_{ij} E_h$,
where $\circ$ represents entrywise multiplication of matrices.
A formally self-dual distance-regular graph is also $Q$-polynomial
with respect to the corresponding ordering of eigenvalues
and has $p^h_{ij} = q^h_{ij}$ ($0 \le i, j, h \le d$).
In this paper, we will use the natural ordering for indexing,
noting when a graph is $Q$-polynomial or formally self-dual
for some other ordering.

For vertices $u$, $v$, $w$ of the distance-regular graph $\G$
and integers $i$, $j$, $h$ ($0 \le i,j,h \le d$)
we denote by $\sV{u & v & w}{i & j & h}$
(or simply $[i\ j\ h]$ when it is clear
which triple $(u,v,w)$ we have in mind)
the number of vertices of $\G$
that are at distances $i$, $j$, $h$ from $u$, $v$, $w$, respectively.
We call these numbers {\em triple intersection numbers}.
They have first been studied
in the case of strongly regular graphs~\cite{cgs78},
and later also for distance-regular graphs,
see for example~\cite{cj08,jkt00,jv12,jv17,u12}.
Unlike the intersection numbers,
these numbers may depend on the particular choice of vertices $u, v, w$
and not only on their pairwise distances.
We may however write down a system of $3d^2$ linear Diophantine equations
with $d^3$ triple intersection numbers as variables,
thus relating them to the intersection numbers, cf.~\cite{jv12}:
{\small
\begin{equation}
\sum_{\ell=1}^d [\ell\ j\ h] = p^U_{jh} - [0\ j\ h], \qquad
\sum_{\ell=1}^d [i\ \ell\ h] = p^V_{ih} - [i\ 0\ h], \qquad
\sum_{\ell=1}^d [i\ j\ \ell] = p^W_{ij} - [i\ j\ 0],
\label{eqn:triple}
\end{equation}
}
where $U = \partial(v, w)$, $V = \partial(u, w)$, $W = \partial(u, v)$, and
$$
[0\ j\ h] = \delta_{jW} \delta_{hV}, \qquad
[i\ 0\ h] = \delta_{iW} \delta_{hU}, \qquad
[i\ j\ 0] = \delta_{iV} \delta_{jU}.
$$
Furthermore, we can use the triangle inequality
to conclude that certain triple intersection numbers must be zero.
Moreover, the following theorem sometimes gives additional equations.

\begin{theorem}\label{thm:krein0}{\rm (\cite[Theorem~3]{cj08},
                                   cf.~\cite[Theorem~2.3.2]{bcn89})}
Let $\G$ be a distance-regular graph with diameter~$d$,
dual eigenmatrix $Q$ and Krein parameters $q_{ij}^h$ $(0\le i,j,h \le d)$.
Then,
\[
\pushQED{\qed}
q^h_{ij} = 0 \quad \Longleftrightarrow \quad
\sum_{r,s,t=0}^d Q_{ri}Q_{sj}Q_{th}\sV{u & v & w}{r & s & t} = 0
\quad \mbox{for all\ \ } u, v, w \in V\G.
\qedhere
\popQED
\]
\end{theorem}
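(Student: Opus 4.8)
The plan is to translate the purely combinatorial sum on the right-hand side into a statement about entries of the primitive idempotents, and then read off the Krein parameter from the structure of \M. First I would record that, since $E_i = n^{-1}\sum_r Q_{ri}A_r$, the $(u,x)$-entry of $E_i$ is $(E_i)_{ux} = n^{-1}Q_{\partial(u,x),i}$. Grouping the vertices $x\in V\G$ according to the triple of distances $(r,s,t)=(\partial(u,x),\partial(v,x),\partial(w,x))$ — whose fibre sizes are exactly the triple intersection numbers — shows that
\[
\sum_{r,s,t=0}^d Q_{ri}Q_{sj}Q_{th}\sV{u & v & w}{r & s & t}
= n^3\sum_{x\in V\G}(E_i)_{ux}(E_j)_{vx}(E_h)_{wx}.
\]
Writing $T(u,v,w)$ for the sum on the right, the claimed equivalence becomes: $q^h_{ij}=0$ if and only if $T(u,v,w)=0$ for all $u,v,w\in V\G$.

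Next I would bring in the algebra. Set $M=E_i\circ E_j\in\M$. The defining relation $E_i\circ E_j = n^{-1}\sum_{h'}q^{h'}_{ij}E_{h'}$ together with the orthogonality $E_{h'}E_h=\delta_{h'h}E_h$ gives $ME_h = n^{-1}q^h_{ij}E_h$. For the backward implication this already suffices: specialising to $v=u$ and using the symmetry of $E_h$, I get $T(u,u,w)=\sum_x M_{ux}(E_h)_{xw}=(ME_h)_{uw}=n^{-1}q^h_{ij}(E_h)_{uw}$. If every $T$ vanishes, then in particular $q^h_{ij}(E_h)_{uw}=0$ for all $u,w$; since $E_h$ is a nonzero projector it has a nonzero entry, forcing $q^h_{ij}=0$.

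The forward implication $q^h_{ij}=0\Rightarrow T(u,v,w)=0$ (now for arbitrary, possibly distinct, $u$ and $v$) is where positive semidefiniteness enters. Writing $E_i=U_iU_i^\top$ with $U_i$ carrying an orthonormal eigenbasis, let $\rho_i(x)$ be the $x$-th row of $U_i$ and put $\sigma(x)=\rho_i(x)\otimes\rho_j(x)$, so that $M_{xy}=\langle\sigma(x),\sigma(y)\rangle$ exhibits $M$ as a Gram matrix (this is the Schur product theorem in coordinates). From $q^h_{ij}=0$ we have $ME_h=0$, that is, $\langle\sigma(x),\zeta_w\rangle=0$ for all $x,w$, where $\zeta_w:=\sum_x(E_h)_{xw}\sigma(x)$. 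But $\zeta_w$ lies in the span of the vectors $\sigma(x)$, so it is orthogonal to itself and hence $\zeta_w=0$. Finally, factoring $(E_i)_{ux}(E_j)_{vx}=\langle\rho_i(u)\otimes\rho_j(v),\sigma(x)\rangle$ yields $T(u,v,w)=\langle\rho_i(u)\otimes\rho_j(v),\zeta_w\rangle=0$, as required.

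I expect the forward direction to be the main obstacle: the backward direction follows from the single diagonal specialisation $v=u$, but to reach the off-diagonal triples one cannot argue entrywise and must instead use that $M=E_i\circ E_j$ is positive semidefinite, which is precisely what turns $ME_h=0$ into the vanishing of the whole vector $\zeta_w$. Some care is also needed to confirm the book-keeping constant $n^3$ and the symmetry $(E_h)_{wx}=(E_h)_{xw}$, but these are routine once the Gram-matrix picture is in place.
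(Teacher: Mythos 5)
Your proof is correct. Note first that the paper itself offers no proof of this statement: it is quoted with a reference to Coolsaet--Juri\v{s}i\'{c} \cite[Theorem~3]{cj08} and to \cite[Theorem~2.3.2]{bcn89}, so there is no ``paper's proof'' to match against; what you have produced is a valid reconstruction of the cited result. Your reduction of the combinatorial sum to $n^3\sum_x (E_i)_{ux}(E_j)_{vx}(E_h)_{wx}$ is exactly the right first step, your backward direction via the diagonal specialisation $v=u$ and $(E_i\circ E_j)E_h=n^{-1}q^h_{ij}E_h$ is sound, and your forward direction via the Gram-matrix form of the Schur product theorem (so that $ME_h=0$ forces the vectors $\zeta_w$ to vanish, not merely certain inner products) correctly supplies the positive-semidefiniteness input that an entrywise argument would miss. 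For comparison, the standard published argument is slightly slicker and gets both directions from a single identity: the vector $\Psi=\sum_{x}E_ie_x\otimes E_je_x\otimes E_he_x$ has $(u,v,w)$-component equal to your $T(u,v,w)$, and using $E_i^2=E_i$ one computes $\lVert\Psi\rVert^2=\sum_{x,y}(E_i)_{xy}(E_j)_{xy}(E_h)_{xy}=\operatorname{tr}\bigl((E_i\circ E_j)E_h\bigr)=n^{-1}q^h_{ij}\operatorname{rank}(E_h)$, so $\Psi=0$ (i.e.\ all $T(u,v,w)=0$) if and only if $q^h_{ij}=0$. Your two-directional argument is equivalent in substance --- your $\zeta_w$ are essentially partial contractions of $\Psi$ --- but the norm computation avoids the separate case analysis. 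Both are complete; there is no gap.
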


Together with integrality and nonnegativity of triple intersection numbers,
we can use all of the above to either derive
that the system of equations has no solution,
or arrive at a small number of solutions,
which gives us new information on the structure of the graph
and may lead to proving its nonexistence.

\section[A two-parameter family of primitive graphs of diameter $3$]%
{A two-parameter family of primitive graphs of diameter $\boldsymbol{3}$}
\label{sec:2param}

In~\cite{jv12},
graphs meeting necessary conditions for the existence of extremal codes
were studied.
One of the families of primitive graphs of diameter $3$
for which these conditions were met was
\begin{equation}
\label{eqn:famcode}
\{a(p+1), (a+1)p, c; 1, c, ap\} ,
\end{equation}
where $a = a_3$, $c = c_2$ and $p = p^3_{33}$.
Graphs belonging to this family are $Q$-polynomial
with respect to the natural ordering of eigenvalues
precisely when the Krein parameter $q^3_{11}$ is zero,
which is equivalent to
\begin{equation}
\label{eqn:famqpoly}
c = {1 \over 4} \left((p+1)^2 + {2a(p+1) \over p+2} \right) .
\end{equation}
Hence, $p+2$ must divide $2a$ for $c$ to be integral.
If $p = 2r-1$, then $a = t(2r+1)$ and $c = r(r+t)$
for some positive integers $r, t$,
which gives us the two-parameter family
$$
\{2rt(2r+1), (2r-1)(2rt+t+1), r(r+t); 1, r(r+t), t(4r^2-1)\} .
$$
In~\cite{jv12},
nonexistence was shown for a feasible subfamily with $r = t \ge 2$.
If, on the other hand, $p$ is even,
integrality of the multiplicity of the second largest eigenvalue
implies that we must have $p = 4r$,
$a = (2r+1)(4t-1)$ and $c = (r+t)(4r+1)$ for some positive integers $r, t$,
giving the family
\begin{equation}
\label{eqn:fameven}
\begin{multlined}
\{(2r+1)(4r+1)(4t-1), 8r(4rt-r+2t), (r+t)(4r+1); \\
1, (r+t)(4r+1), 4r(2r+1)(4t-1)\} .
\end{multlined}
\end{equation}
We find two one-parameter infinite subfamilies
of feasible intersection arrays by setting $t = 4r^2$ or $t = 4r^2 + 2r$:
\begin{gather*}
\begin{multlined}
\{(2r+1)(4r+1)(16r^2-1), 8r^2(16r^2+8r-1), r(4r+1)^2; \\
1, r(4r+1)^2, 4r(2r+1)(16r^2-1)\},
\end{multlined} \\
\begin{multlined}
\{(2r+1)(4r+1)(16r^2+8r-1), 8r^2(4r+1)(4r+3), r(4r+1)(4r+3); \\
1, r(4r+1)(4r+3), 4r(2r+1)(16r^2+8r-1)\}.
\end{multlined}
\end{gather*}
There are also other feasible cases
-- for instance, when $r = 2$,
we have, besides the cases from the two subfamilies above,
feasible examples when $t \in \{4, 7, 196\}$.
The case with $r = 1$ and $t = 4$ belonging to the first subfamily above
is also listed in the list of feasible parameter sets
for $3$-class $Q$-polynomial association schemes
by J.~S.~Williford~\cite{w17a}.

We now prove that a graph $\Delta$
with intersection array \eqref{eqn:fameven} does not exist.
The proof parallels that of~\cite[Lems.~1, 3]{jv12}
-- in fact,
a significant part of the proof
may be extended to the entire family \eqref{eqn:famqpoly},
as it has been done in~\cite{v13}.
The computation needed to obtain the results in this section
is illustrated in the
\href{https://github.com/jaanos/sage-drg/blob/master/jupyter/DRG-d3-2param.ipynb}{\tt jupyter/DRG-d3-2param.ipynb}
notebook included in the {\tt sage-drg} package~\cite{v18}.

\begin{lemma} \label{lem:2param:333}
Let $\Delta$ be a distance-regular graph
with intersection array \eqref{eqn:fameven},
and $u', v, w$ be vertices of $\Delta$
with $\partial(u', v) = 1$, $\partial(u', w) = 2$ and $\partial(v, w) = 3$.
Then $\sV{u' & v & w}{3 & 3 & 3} = 1$.
\end{lemma}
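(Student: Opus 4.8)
The plan is to compute the triple intersection number $\sV{u' & v & w}{3 & 3 & 3}$ by exploiting the $Q$-polynomial structure of $\Delta$. Since the family \eqref{eqn:fameven} arises precisely when $q^3_{11} = 0$, Theorem~\ref{thm:krein0} furnishes an extra linear equation on the triple intersection numbers for the configuration $(u',v,w)$, namely $\sum_{r,s,t=0}^{3} Q_{r1}Q_{s1}Q_{t1}\,[r\ s\ t] = 0$. First I would set up the full system: the $3d^2 = 27$ equations of \eqref{eqn:triple} specialized to the distances $U=\partial(v,w)=3$, $V=\partial(u',w)=2$, $W=\partial(u',v)=1$, together with the boundary values $[0\ j\ h]$, $[i\ 0\ h]$, $[i\ j\ 0]$ dictated by these distances, and the Krein equation above. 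The intersection numbers $p^h_{ij}$ appearing on the right-hand sides are all determined by the intersection array \eqref{eqn:fameven}, as is the dual eigenmatrix $Q$.

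Next I would prune the variables using the triangle inequality. Many of the $[i\ j\ h]$ must vanish because the prescribed pairwise distances force geometric constraints: for instance, a vertex at distance $i$ from $u'$ and $j$ from $v$ must satisfy $|i-j| \le \partial(u',v) = 1$, and similar inequalities tie each coordinate to $\partial(u',w)=2$ and $\partial(v,w)=3$. Eliminating the forced zeros should collapse the system substantially. The expectation is that, after substitution, the linear system in the surviving unknowns has a unique solution (or a one-parameter family pinned down by integrality and nonnegativity), and that reading off the $[3\ 3\ 3]$ coordinate yields the value $1$.

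The main obstacle I anticipate is purely computational bookkeeping rather than conceptual: the entries of $Q$ for this two-parameter family are irrational expressions in $r$ and $t$ (they involve the eigenvalues, which are roots of the characteristic polynomial of the intersection matrix), so the Krein equation becomes an identity among such expressions, and verifying that the resulting linear system has full rank with the claimed unique solution requires careful symbolic manipulation. This is exactly the kind of task the \texttt{sage-drg} package is designed to automate, so in practice I would delegate the elimination and the solution of the Diophantine system to the package, as demonstrated in the accompanying notebook, and then confirm that the output forces $\sV{u' & v & w}{3 & 3 & 3} = 1$. A secondary point to check is that the configuration with these pairwise distances actually occurs in $\Delta$ (so that the statement is nonvacuous), which follows from the positivity of the relevant intersection number $p^{3}_{12}$.
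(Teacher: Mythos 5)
There is a genuine gap here. Your plan is to set up the system \eqref{eqn:triple} directly for the mixed triple $(u',v,w)$ with distances $1,2,3$, add the Krein equation(s) coming from $q^3_{11}=0$, and hope that the solution is unique (or that integrality and nonnegativity pin down the one free parameter) so that $[3\ 3\ 3]=1$ can simply be read off. This cannot work. The paper performs exactly this computation in the proof of Theorem~\ref{thm:2param:nonex}: there, the system for $(u',v,w)$ together with the Krein equation only becomes uniquely solvable \emph{after} the value $[3\ 3\ 3]=1$ is supplied as an external input from Lemma~\ref{lem:2param:333}, and the resulting solution contains the nonintegral entry $[1\ 1\ 3]=2t-1/2$. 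In other words, the value $\alpha=[3\ 3\ 3]=1$ is \emph{incompatible} with integrality of the remaining triple intersection numbers --- that incompatibility is the entire engine of the nonexistence proof. So if you solved the system in terms of the free parameter $\alpha$ and then imposed integrality, you would be pushed away from $\alpha=1$, not toward it; the lemma simply is not a consequence of the linear system for this triple.

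The missing ideas are combinatorial and involve a different triple of vertices. The paper introduces an auxiliary vertex $u$ with $\partial(u,v)=\partial(u,w)=3$ (possible since $p^3_{33}=4r>0$) and computes the triple intersection numbers of $(u,v,w)$, a triple pairwise at distance $3$, using the three equations from $q^3_{11}=q^1_{13}=q^1_{31}=0$. Nonnegativity of $[3\ 3\ 1]$ and $[3\ 3\ 2]$ forces them to vanish, so every pair of vertices at distance $3$ extends to a set of $4r+2$ vertices pairwise at distance $3$, i.e., a maximal $1$-code. Only then, via the identity $a_3p^3_{33}=c_3$ and the counting result \cite[Prop.~2]{jv12} about such codes, does one obtain $\sV{u' & v & w}{3 & 3 & 3}=1$ for the mixed-distance triple. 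Your proposal contains neither the passage to the distance-$3$ triple nor the appeal to the $1$-code structure, and without them the claimed conclusion does not follow.
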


\begin{proof}
Let $u$ be a vertex of $\Delta$ at distance $3$ from both $v$ and $w$
(such a vertex exists since $p^3_{33} = 4r > 0$).
We consider the triple intersection numbers $[i\ j\ h]$
that correspond to $(u, v, w)$.
As $q^3_{11} = q^1_{13} = q^1_{31} = 0$,
Theorem~\ref{thm:krein0} gives three additional equations
to the system \eqref{eqn:triple},
allowing us to express its solution
in terms of a single parameter $\alpha = [3\ 3\ 3]$.
Let us express the counts of vertices
at distance $1$ or $2$ from one of $u, v, w$
and at distance $3$ from the other two vertices:
\begin{align*}
[3\ 3\ 1] = [3\ 1\ 3] = [1\ 3\ 3] &= {(\alpha - 4r + 1)(4r+1) \over 4r-1} , \\
[3\ 3\ 2] = [3\ 2\ 3] = [2\ 3\ 3] &= {8r(4r - 1 - \alpha) \over 4r-1} .
\end{align*}
For the values above to be nonnegative, we must have $\alpha = 4r-1$,
which means that they are all zero.
As the choice of $u, v, w$ was arbitrary,
this implies that any pair of vertices at distance $3$
induces a set of $4r+2$ vertices pairwise at distance $3$
-- in the terminology of~\cite{jv12},
this is a maximal $1$-code in $\Delta$.
Since we have $a_3 p^3_{33} = 4r(2r+1)(4t-1) = c_3$,
it follows by~\cite[Prop.~2]{jv12}
that $\sV{u' & v & w}{3 & 3 & 3} = 1$ holds.
\end{proof}

\begin{theorem} \label{thm:2param:nonex}
A distance-regular graph $\Delta$ with intersection array \eqref{eqn:fameven}
does not exist.
\end{theorem}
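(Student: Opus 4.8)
The plan is to use Lemma~\ref{lem:2param:333} to pin down the local structure around a maximal $1$-code, and then to exhibit a vertex configuration whose triple intersection numbers, once this structural information is fed into the system, cannot be completed to nonnegative integers. Throughout I work with a distance-regular graph $\Delta$ with array~\eqref{eqn:fameven}, aiming for a contradiction.

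First I would fix a clique $C$ of $4r+2$ vertices that are pairwise at distance $3$ (the maximal $1$-code produced in the proof of Lemma~\ref{lem:2param:333}). For an external vertex $x$ let $(n_1,n_2,n_3)$ record how many members of $C$ lie at distance $1,2,3$ from $x$. Two observations constrain this vector sharply. A vertex adjacent to two members of $C$ would place them at distance $\le 2$, so $n_1\le 1$; and a vertex at distance $3$ from two members $v,w\in C$ would be one of the $p^3_{33}=4r$ common distance-$3$ vertices of $v$ and $w$, which are exactly the remaining members of $C$, forcing $x\in C$, so $n_3\le 1$ for $x\notin C$. Hence $n_2=4r+2-n_1-n_3\ge 4r\ge 1$, and whenever $n_1\ge 1$ I may apply Lemma~\ref{lem:2param:333} to $x$, its neighbour in $C$, and any distance-$2$ vertex of $C$ to conclude $n_3=1$. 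Thus the only possible profiles are $(1,4r,1)$, $(0,4r+2,0)$, and $(0,4r+1,1)$.

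Next I would count the profiles. Summing $n_1$ over external vertices gives $(4r+2)b_0$, summing $n_3$ gives $(4r+2)\bigl(k_3-(4r+1)\bigr)$, and summing $n_2$ gives $(4r+2)k_2$; using the identity $k_3-b_0=4r+1$, which is immediate from~\eqref{eqn:fameven}, the profile $(0,4r+1,1)$ turns out to have multiplicity $0$, the profile $(1,4r,1)$ has multiplicity $(4r+2)b_0$, and the ``far'' profile $(0,4r+2,0)$ has multiplicity $k_2-4rb_0=\tfrac{4r(2r+1)(4t-1)(4r+3)(t-r)}{r+t}$. Nonnegativity of this last count already disposes of $t<r$, and for all parameters where it is positive (which includes the feasible arrays of the family) there is a vertex $x$ at distance $2$ from every vertex of $C$; the residual values of $t$ relative to $r$ I would dispatch directly by an analogous argument.

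The contradiction comes from such a far vertex $x$. For any $v,w\in C$ the triple $(x,v,w)$ realises the distance pattern $(\partial(v,w),\partial(x,w),\partial(x,v))=(3,2,2)$, but it carries information a generic $(2,2,3)$ triple does not: the $4r$ common distance-$3$ vertices of $v$ and $w$ are precisely $C\setminus\{v,w\}$, all at distance $2$ from $x$, so $\sV{x & v & w}{2 & 3 & 3}=4r$ and $\sV{x & v & w}{3 & 3 & 3}=0$. Feeding these equations, the boundary values, and the three Krein relations coming from $q^3_{11}=q^1_{13}=q^1_{31}=0$ (Theorem~\ref{thm:krein0}) into the system~\eqref{eqn:triple}, I would solve for the remaining triple intersection numbers and exhibit one forced to be negative or non-integral for all admissible $r,t$. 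I expect this to be the main obstacle: the augmented system is large and its solution depends rationally on $r$ and $t$, so the argument rests on the symbolic computation (as automated by {\tt sage-drg}), and one must verify that the offending entry remains infeasible uniformly across the whole family rather than merely for sample parameters.
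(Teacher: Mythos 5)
Your structural analysis of the maximal $1$-code $C$ is sound --- the bounds $n_1\le 1$, $n_3\le 1$, the identity $k_3-b_0=4r+1$, and the resulting count $N_b=k_2-4rb_0=4r(2r+1)(4t-1)(4r+3)(t-r)/(r+t)$ for the ``far'' profile all check out. But the proof has two genuine gaps. First, the decisive step is never carried out: you set up the augmented system for the triple $(x,v,w)$ with the extra equations $[2\ 3\ 3]=4r$ and $[3\ 3\ 3]=0$ and then assert that \emph{some} entry will come out negative or nonintegral, explicitly flagging this as ``the main obstacle.'' That is a plan, not a proof; nothing guarantees the solution of that particular system is infeasible uniformly in $r,t$, and no offending entry is exhibited. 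Second, the far vertex only exists when $N_b>0$, i.e.\ when $t>r$; the case $t=r$ (where $N_b=0$) is waved away as ``dispatched by an analogous argument,'' and $t<r$ is handled only by the sign of $N_b$ --- so even granting the final computation, the argument as written does not cover the whole family.

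For comparison, the paper's proof is considerably shorter and avoids the profile census entirely. It takes a triple $u',v,w$ with $\partial(u',v)=1$, $\partial(u',w)=2$, $\partial(v,w)=3$ (which exists since $p^2_{13}=b_2>0$, with no case split on $r,t$), invokes Lemma~\ref{lem:2param:333} to fix $[3\ 3\ 3]=1$ for that triple, and adds the single Krein equation from $q^3_{11}=0$ via Theorem~\ref{thm:krein0}; the system \eqref{eqn:triple} then has a unique solution in which $[1\ 1\ 3]=2t-1/2$, nonintegral for every integer $t$. If you want to salvage your route, you would need to actually solve your augmented system symbolically and display the infeasible entry, and separately close the $t=r$ case --- at which point the paper's $(1,2,3)$-triple argument is the more economical choice, since the only structural input it needs from the $1$-code is the value $[3\ 3\ 3]=1$ already supplied by Lemma~\ref{lem:2param:333}.
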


\begin{proof}
Let $u', v, w$ be vertices of $\Delta$ with
$\partial(u', v) = 1$, $\partial(u', w) = 2$ and $\partial(v, w) = 3$
(such vertices exist, since we have $p^2_{13} = b_2 = (r+t)(4r+1) > 0$).
We consider the triple intersection numbers $[i\ j\ h]$
that correspond to $(u', v, w)$.
By Lemma~\ref{lem:2param:333}, we have $[3\ 3\ 3] = 1$.
Using $q^3_{11} = 0$,
Theorem~\ref{thm:krein0} gives an additional equation
which allows us to obtain a unique solution to the system \eqref{eqn:triple}.
However, we obtain $[1\ 1\ 3] = 2t - 1/2$,
which is nonintegral for all integers $t$.
Therefore, the graph $\Delta$ does not exist.
\end{proof}

\section[A primitive graph with diameter $3$ and $1360$ vertices]%
{A primitive graph with diameter $\boldsymbol{3}$
and $\boldsymbol{1360}$ vertices}\label{sec:135}

Let $\Lambda$ be a distance-regular graph with intersection array
\begin{equation}
\{135, 128, 16; 1, 16, 120\}. \label{eqn:135:ia}
\end{equation}
This intersection array can be obtained from \eqref{eqn:famcode}
by setting $a = 15$, $c = 16$ and $p = 8$.
The graph $\Lambda$ has diameter $3$ and $1360$ vertices.
It is not $Q$-polynomial, however its Krein parameter $q^3_{33}$ is zero.
We show that such a graph does not exist.
The computation needed to prove Theorem~\ref{thm:135:nonex}
is illustrated in the
\href{https://github.com/jaanos/sage-drg/blob/master/jupyter/DRG-135-128-16-1-16-120.ipynb}{\tt jupyter/DRG-135-128-16-1-16-120.ipynb}
notebook included in the {\tt sage-drg} package~\cite{v18}.

\begin{theorem} \label{thm:135:nonex}
A distance-regular graph $\Lambda$ with intersection array \eqref{eqn:135:ia}
does not exist.
\end{theorem}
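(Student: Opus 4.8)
The plan is to follow the same template that worked for the two-parameter family in Theorem~\ref{thm:2param:nonex}: exploit the vanishing Krein parameter $q^3_{33}=0$ to extract extra linear equations among the triple intersection numbers via Theorem~\ref{thm:krein0}, then look for an arithmetic contradiction (nonintegrality or negativity) or a structural one. Here the relevant array is $\{135,128,16;1,16,120\}$ on $1360$ vertices, and the only vanishing Krein parameter is $q^3_{33}$ rather than $q^3_{11}$, so the extra equation I get from Theorem~\ref{thm:krein0} is the one indexed by $(i,j,h)=(3,3,3)$. First I would fix a triple $(u,v,w)$ of vertices and set up the system~\eqref{eqn:triple} of $3d^2=27$ linear equations in the $d^3=27$ unknowns $[i\ j\ h]$, using the triangle inequality to force the evident zeros. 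I expect the natural starting configuration to be three vertices mutually at distance $3$, i.e.\ $U=V=W=3$, since $q^3_{33}=0$ is a ``distance-$3$'' condition and that is where the extra equation bites hardest.

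Next I would adjoin the single homogeneous equation $\sum_{r,s,t=0}^{3} Q_{r3}Q_{s3}Q_{t3}\,[r\ s\ t]=0$ coming from $q^3_{33}=0$, after computing the dual eigenmatrix $Q$ from the intersection array. With this equation added to~\eqref{eqn:triple}, the solution space should collapse to a low-dimensional (ideally one-parameter) family, parametrized say by some $\alpha=[3\ 3\ 3]$, exactly as in Lemma~\ref{lem:2param:333}. I would then write every $[i\ j\ h]$ as an affine function of $\alpha$ and impose integrality and nonnegativity. The hope is that either the nonnegativity bounds on two different entries are incompatible, or some entry is forced to a nonintegral value, immediately ruling out the mutual-distance-$3$ configuration or pinning down a unique solution that then contradicts a count elsewhere.

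\textbf{The main obstacle} I anticipate is that a single distance configuration and the lone equation from $q^3_{33}=0$ may not suffice to reach a contradiction on their own --- the residual parameter might survive with a genuine integer window. In that case the plan is to play off \emph{several} distance configurations against one another: compute the admissible solution set for each choice of pairwise distances $(U,V,W)$ (for instance $(3,3,3)$, $(2,3,3)$, $(1,3,3)$, etc.), and use global consistency --- the fact that summing a triple intersection number over a neighborhood must reproduce a genuine intersection number $p^h_{ij}$, or that a certain configuration must occur with positive frequency --- to force the contradiction. This is precisely the kind of combinatorial follow-up the paper advertises after the triple intersection numbers are determined, and it is where I expect the real work to lie, since the bookkeeping across configurations is delicate and the contradiction is unlikely to fall out of a single system.

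Finally, I would double-check the arithmetic of the contradiction symbolically rather than numerically, so that the conclusion holds exactly; the accompanying Sage computation in the \href{https://github.com/jaanos/sage-drg/blob/master/jupyter/DRG-135-128-16-1-16-120.ipynb}{\tt jupyter/DRG-135-128-16-1-16-120.ipynb} notebook is meant to discharge exactly this verification. If all admissible configurations lead to an impossible count, nonexistence of $\Lambda$ follows.
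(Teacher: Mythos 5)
Your method is exactly the paper's: adjoin the single equation from $q^3_{33}=0$ via Theorem~\ref{thm:krein0} to the system~\eqref{eqn:triple} for one well-chosen triple of vertices, reduce to a one-parameter family, and kill it by integrality and nonnegativity. The one concrete divergence is the choice of configuration. The paper does \emph{not} start from three vertices pairwise at distance $3$: it takes three pairwise \emph{adjacent} vertices (which exist since $p^1_{11}=6>0$), parametrizes the solution by $\alpha=[1\ 1\ 1]$, and obtains $[3\ 3\ 3]=(71-27\alpha)/8$; nonnegativity forces $\alpha\in\{0,1,2\}$, and in each case $[3\ 3\ 3]$ is nonintegral, so the contradiction falls out of a single system with no cross-configuration bookkeeping. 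Your heuristic that the $q^3_{33}=0$ equation ``bites hardest'' at the mutual-distance-$3$ configuration is not a reliable guide --- the Krein equation applies to every triple of vertices, and which configuration yields an over-determined or arithmetically impossible system is something only the computation reveals. Since your plan explicitly sweeps over configurations and verifies symbolically, you would land on the adjacent-triple case (or possibly find another that works), so this is a difference of starting point rather than a gap; but as written the proposal has not yet exhibited the contradiction, and your anticipated ``main obstacle'' (needing to play several configurations off against one another) does not arise in the actual proof.
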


\begin{proof}
Let $u, v, w$ be three pairwise adjacent vertices of $\Lambda$
(such vertices exist, since we have $p^1_{11} = 6 > 0$).
We consider triple intersection numbers $[i\ j\ h]$
that correspond to $(u, v, w)$.
As $q^3_{33} = 0$,
Theorem~\ref{thm:krein0} gives an additional equation
to the system \eqref{eqn:triple},
allowing us to express its solution
in terms of a single parameter $\alpha = [1\ 1\ 1]$.
In particular, we obtain
$$
[3\ 3\ 3] = {71 - 27\alpha \over 8} .
$$
Clearly, $\alpha$ must be a nonnegative integer.
For $[3\ 3\ 3]$ to be nonnegative, we must have $\alpha \in \{0, 1, 2\}$.
However, $[3\ 3\ 3]$ is still nonintegral in these cases,
showing that the graph $\Lambda$ does not exist.
\end{proof}

\section[A primitive graph with diameter $3$ and $1600$ vertices]%
{A primitive graph with diameter $\boldsymbol{3}$
and $\boldsymbol{1600}$ vertices}\label{sec:234}

Let $\Xi$ be a distance-regular graph with intersection array
\begin{equation}
\{234, 165, 12; 1, 30, 198\}. \label{eqn:234:ia}
\end{equation}
The graph $\Xi$ has diameter $3$ and $1600$ vertices.
The intersection array \eqref{eqn:234:ia} has been found
as an example of a feasible parameter set for a distance-regular graph
which is formally self-dual for an ordering of eigenvalues
distinct from the natural ordering
-- in fact, $\Xi$ is $Q$-polynomial for the ordering $0, 2, 3, 1$,
so its Krein parameters $q^1_{22}$, $q^2_{12}$ and $q^2_{21}$ are zero.
The intersection array \eqref{eqn:234:ia}
is also listed in the list of feasible parameter sets
for $3$-class $Q$-polynomial association schemes
by J.~S.~Williford~\cite{w17a}.
We show that such a graph does not exist.
The computation needed to prove Theorem~\ref{thm:234:nonex}
is illustrated in the
\href{https://github.com/jaanos/sage-drg/blob/master/jupyter/DRG-234-165-12-1-30-198.ipynb}{\tt jupyter/DRG-234-165-12-1-30-198.ipynb}
notebook included in the {\tt sage-drg} package~\cite{v18}.

\begin{theorem} \label{thm:234:nonex}
A distance-regular graph $\Xi$ with intersection array \eqref{eqn:234:ia}
does not exist.
\end{theorem}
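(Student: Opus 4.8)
The plan is to follow the template of the proofs of Theorems~\ref{thm:2param:nonex} and~\ref{thm:135:nonex}. First I would record the intersection numbers coming from \eqref{eqn:234:ia}: the valency is $k = 234$, and $a_1 = 68$, $a_2 = 192$, $a_3 = 36$, from which all $p^h_{ij}$ and the dual eigenmatrix $Q$ are computed. Since $\Xi$ is $Q$-polynomial, indeed formally self-dual, for the ordering $0,2,3,1$, its Krein parameters $q^1_{22}$, $q^2_{12}$ and $q^2_{21}$ vanish. By Theorem~\ref{thm:krein0}, each vanishing parameter $q^h_{ij}$ supplies, for every triple $(u,v,w)$, the linear equation $\sum_{r,s,t=0}^{3} Q_{ri} Q_{sj} Q_{th} [r\ s\ t] = 0$ on the triple intersection numbers, which I would adjoin to the system \eqref{eqn:triple}.

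Next I would fix a triple $(u,v,w)$ whose pairwise distances $U = \partial(v,w)$, $V = \partial(u,w)$, $W = \partial(u,v)$ are chosen to make these Krein equations as restrictive as possible, subject only to $p^U_{VW} > 0$ so that the triple exists; I would try several distance profiles. With the boundary values $[0\ j\ h]$, $[i\ 0\ h]$, $[i\ j\ 0]$ fixed as in the preamble and the triangle inequality forcing many of the remaining $[i\ j\ h]$ to zero, the $27$ equations of \eqref{eqn:triple} together with the (at most three independent) Krein equations should cut the solution space down to a small affine family -- ideally a single free parameter $\alpha$, or even a unique solution. I would then express the distinguished entries as affine functions of $\alpha$ and impose nonnegativity and integrality, hoping, as in the earlier sections, that no admissible $\alpha$ survives.

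The step I expect to be the main obstacle is precisely this last one. Unlike the $135$ and two-parameter cases, a single well-chosen triple may still admit a genuine nonnegative integer solution, so non-integrality alone need not finish the argument; this is consistent with Section~\ref{sec:234} not appearing in the earlier thesis treatment. If that happens, I would exploit the formal self-duality (so $p^h_{ij} = q^h_{ij}$ for the ordering $0,2,3,1$), which yields additional numerical relations, and I would compare the parametric solutions obtained from several distance profiles of $(u,v,w)$. In the spirit of Lemma~\ref{lem:2param:333}, the aim would be to show that the forced values of certain triple intersection numbers describe a local configuration that cannot be globally consistent -- for example, that the set of vertices at prescribed distances from a fixed pair must satisfy two incompatible counting identities. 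Isolating which triple, or combination of triples, exposes this inconsistency, rather than the routine linear algebra, is where the real difficulty lies.
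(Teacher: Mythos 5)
Your plan is essentially the paper's proof: the paper takes $u,v,w$ pairwise at distance $3$ (possible since $p^3_{33}=8>0$), adjoins the three Krein equations from $q^1_{22}=q^2_{12}=q^2_{21}=0$ to the system \eqref{eqn:triple}, and obtains a one-parameter solution in $\alpha=[3\ 3\ 3]$ with $[3\ 3\ 2]=[3\ 2\ 3]=[2\ 3\ 3]=-17-4\alpha$, which is negative for every $\alpha\ge 0$. The extra machinery you hold in reserve (comparing several distance profiles, exploiting $p^h_{ij}=q^h_{ij}$, a global counting argument) turns out to be unnecessary --- the very first triple you would try already yields the contradiction by nonnegativity alone.
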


\begin{proof}
Let $u, v, w$ be three vertices of $\Xi$ that are pairwise at distance $3$
(such vertices exist, since we have $p^3_{33} = 8 > 0$).
We consider triple intersection numbers $[i\ j\ h]$
that correspond to $(u, v, w)$.
As $q^1_{22} = q^2_{12} = q^2_{21} = 0$,
Theorem~\ref{thm:krein0} gives three additional equations
to the system \eqref{eqn:triple},
allowing us to express its solution
in terms of a single parameter $\alpha = [3\ 3\ 3]$.
In particular, we obtain
$$
[3\ 3\ 2] = [3\ 2\ 3] = [2\ 3\ 3] = -17 - 4\alpha .
$$
Clearly, $\alpha$ must be nonnegative,
but then we have $[3\ 3\ 2] = [3\ 2\ 3] = [2\ 3\ 3] < 0$, a contradiction.
We conclude that the graph $\Xi$ does not exist.
\end{proof}

\section[A bipartite graph with diameter $5$]%
{A bipartite graph with diameter $\boldsymbol{5}$}\label{sec:5554}

Let $\Sigma$ be a distance-regular graph with intersection array
\begin{equation}
\{55, 54, 50, 35, 10; 1, 5, 20, 45, 55\}. \label{eqn:5554:ia}
\end{equation}
This intersection array appears
in the list of feasible intersection arrays
for bipartite non-antipodal distance-regular graphs of diameter $5$
by Brouwer et.~al.~\cite[p.~418]{bcn89} as an open case.
The existence of such a graph would give a counterexample
to a conjecture by MacLean and Terwilliger~\cite{mlt06}, cf.~Lang~\cite{l14}.
The computation needed to obtain the results in this section
is illustrated in the
\href{https://github.com/jaanos/sage-drg/blob/master/jupyter/DRG-55-54-50-35-10-bipartite.ipynb}{\tt jupyter/DRG-55-54-50-35-10-bipartite.ipynb}
notebook included in the {\tt sage-drg} package~\cite{v18}.

The graph $\Sigma$ has diameter $5$ and $3500$ vertices.
The partition of $\Sigma$ corresponding to two vertices at distance $2$
is shown in Figure~\ref{fig:dp22}.
The graph is $Q$-polynomial for the natural ordering of eigenvalues,
see for example~\cite[p.~418]{bcn89}.
Moreover, as the graph is bipartite,
it is also $Q$-antipodal~\cite[Thm.~8.2.1]{bcn89}.
Many Krein parameters are zero,
in particular $q^3_{11}$ and $q^4_{11}$ due to the triangle inequality.
We use this fact in the proof of the following statement.

\begin{figure}[t]
	\centering
	\beginpgfgraphicnamed{dp22-d5b}
	\begin{tikzpicture}[style=thick,scale=0.6]
		\scriptsize
        \renewcommand{\arraystretch}{1.2}
        \setlength{\arraycolsep}{2pt}
        \tikzstyle{every node}=[]
        \tikzstyle{vertex}=[draw, circle, inner sep=0pt, minimum size=1mm]
        \tikzstyle{baloon}=[draw, circle, inner sep=0pt, minimum size=7mm]
        \tikzstyle{edge}=[auto]

        \node   (x) at (-6, 2.5) [vertex,label=135:$u$] {};
        \node   (y) at (-6,-2.5) [vertex,label=225:$v$] {};
        \node (D11) at (-6, 0) [baloon,label=below:] {5};
        \node (D13) at (-3, 2.5) [baloon,label=above:] {50};
        \node (D31) at (-3,-2.5) [baloon,label=below:] {50};
        \node (D22) at (-3, 0) [baloon,label=below:] {243};
        \node (D24) at ( 0, 2.5) [baloon,label=below:] {350};
        \node (D42) at ( 0,-2.5) [baloon,label=below:] {350};
        \node (D33) at ( 0, 0) [baloon,label=below:] {1260};
        \node (D35) at ( 3, 2.5) [baloon,label=below:] {175};
        \node (D53) at ( 3,-2.5) [baloon,label=below:] {175};
        \node (D44) at ( 3, 0) [baloon,label=below:] {805};
        \node (D55) at ( 6, 0) [baloon,label=below:] {35};

        \draw (x) -- (D11)
            node[edge, very near start, swap] {$5$}
            node[edge, very near end, swap] {$1$};
        \draw (x) -- (D13)
            node[edge, very near start] {$50$}
            node[edge, very near end] {$1$};
        \draw (y) -- (D11)
            node[edge, very near start] {$5$}
            node[edge, very near end] {$1$};
        \draw (y) -- (D31)
            node[edge, very near start, swap] {$50$}
            node[edge, very near end, swap] {$1$};
        \draw (D11) -- (D22)
            node[edge, very near start] {$53$};
        \draw (D13) -- (D22)
            node[edge, very near start, swap] {$19$};
        \draw (D13) -- (D24)
            node[edge, very near start] {$35$}
            node[edge, very near end] {$5$};
        \draw (D31) -- (D22)
            node[edge, very near start] {$19$};
        \draw (D31) -- (D42)
            node[edge, very near start, swap] {$35$}
            node[edge, very near end, swap] {$5$};
        \draw (D22) -- (D33);
        \draw (D24) -- (D33)
            node[edge, very near start] {$40$};
        \draw (D24) -- (D35)
            node[edge, very near start] {$10$}
            node[edge, very near end] {$20$};
        \draw (D42) -- (D33)
            node[edge, very near start, swap] {$40$};
        \draw (D42) -- (D53)
            node[edge, very near start, swap] {$10$}
            node[edge, very near end, swap] {$20$};
        \draw (D33) -- (D44);
        \draw (D35) -- (D44)
            node[edge, very near start] {$35$};
        \draw (D53) -- (D44)
            node[edge, very near start, swap] {$35$};
        \draw (D44) -- (D55)
            node[edge, very near end, swap] {$55$};
[i+1\ i-1]
	\end{tikzpicture}
	\endpgfgraphicnamed
	\caption{
		 The partition of vertices of $\Sigma$
         by distance from a pair of vertices $u, v$ at distance $2$.
		 The part that is at distance $i$ from $u$ and distance $j$ from $v$
		 has size $p_{ij}^2$.
		 As the graph is bipartite,
         the intersection number $p_{ij}^2$ is nonzero
         only when $i+j$ is even.
         Moreover, there are no edges within each part.
		 It is natural to consider $[1\ 1\ 1]$
         for $w$ at distance $2$ from both $u$ and $v$,
		 see Lemma~\ref{lem:5554:222}.
	 }
	\label{fig:dp22}
\end{figure}
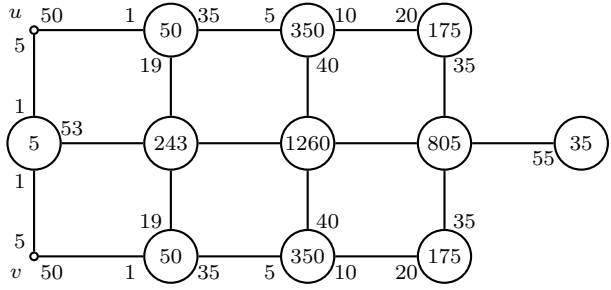

\begin{lemma} \label{lem:5554:222}
Let $\Sigma$ be a distance-regular graph
with intersection array \eqref{eqn:5554:ia},
and $u, v, w$ be vertices of $\Sigma$ that are pairwise at distance $2$.
Then $\sV{u & v & w}{1 & 1 & 1} \le 1$.
\end{lemma}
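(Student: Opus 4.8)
The plan is to follow the same template as the proofs of Theorems~\ref{thm:135:nonex} and~\ref{thm:234:nonex}, specialized to a triple of vertices pairwise at distance~$2$. First I would fix such a triple $(u, v, w)$ -- it exists because $p^2_{22} > 0$ (the part at distance~$2$ from both $u$ and $v$ has size $243$ in Figure~\ref{fig:dp22}) -- and set up the linear system~\eqref{eqn:triple} in the $d^3 = 125$ triple intersection numbers $[i\ j\ h]$ attached to it, with $U = V = W = 2$. Before solving, I would prune the unknowns using two elementary observations. Since $\Sigma$ is bipartite and $u, v, w$ lie in a common part, $[i\ j\ h]$ vanishes unless $i$, $j$ and $h$ all have the same parity; and the triangle inequality applied coordinatewise forces $[i\ j\ h] = 0$ whenever two of $i, j, h$ differ by more than the corresponding pairwise distance~$2$, or sum to less than it. Together these constraints eliminate the large majority of the variables.

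Next I would invoke Theorem~\ref{thm:krein0}. The graph is $Q$-polynomial and, being bipartite, also $Q$-antipodal, so a substantial collection of Krein parameters $q^h_{ij}$ vanishes; in particular $q^3_{11} = q^4_{11} = 0$ hold by the triangle inequality, as already noted. For each vanishing $q^h_{ij}$, Theorem~\ref{thm:krein0} supplies the linear relation $\sum_{r,s,t=0}^d Q_{ri} Q_{sj} Q_{th} [r\ s\ t] = 0$ among the triple intersection numbers. Adjoining these equations to the pruned system~\eqref{eqn:triple}, I expect the solution space to collapse to a single free parameter, which I would take to be $\alpha = [1\ 1\ 1]$; every surviving triple intersection number should then be an explicit affine function of $\alpha$ with rational coefficients. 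This reduction is the step I would delegate to the \texttt{sage-drg} package, since by hand the dual eigenmatrix entries and the resulting system of more than $75$ equations are unwieldy.

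Finally, I would read off the bound from nonnegativity. Among the affine expressions for the $[i\ j\ h]$ I expect at least one to be strictly decreasing in $\alpha$ and to become negative once $\alpha \ge 2$; combined with the fact that every triple intersection number is a nonnegative integer, this forces $\alpha = [1\ 1\ 1] \le 1$, as claimed. The main obstacle is not conceptual but one of bookkeeping: correctly enumerating which Krein parameters vanish, so that Theorem~\ref{thm:krein0} supplies genuinely valid equations, and then isolating the single expression whose nonnegativity is binding at the threshold between~$1$ and~$2$. A secondary subtlety is verifying that the combined system really does reduce to exactly one free parameter rather than retaining extra freedom, since too few independent Krein relations would yield only a weaker bound.
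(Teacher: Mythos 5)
Your proposal follows essentially the same route as the paper's proof: exploit bipartiteness to kill the odd-parity triple intersection numbers, adjoin the Krein equations from $q^3_{11} = q^4_{11} = 0$ via Theorem~\ref{thm:krein0} to the system \eqref{eqn:triple}, reduce to the single free parameter $\alpha = [1\ 1\ 1]$, and conclude from nonnegativity and integrality. The paper identifies the binding expression explicitly as $[5\ 5\ 5] = 20 - 12\alpha$, which gives $\alpha \le 1$ exactly as you anticipate; the only content missing from your sketch is this final explicit computation, which the {\tt sage-drg} package supplies.
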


\begin{proof}
We consider the triple intersection numbers $[i\ j\ h]$
that correspond to $(u,v,w)$.
Since the graph $\Sigma$ is bipartite,
we have $[i\ j\ h] = 0$ whenever any of the sums $i+j$, $j+h$, $h+i$ is odd.
As $q^3_{11} = q^4_{11} = 0$,
Theorem~\ref{thm:krein0} gives us two additional equations
to the system \eqref{eqn:triple},
thus allowing us to express the solution of the system
in terms of a single parameter $\alpha = [1\ 1\ 1]$.
In particular, we obtain
$$
[5\ 5\ 5] = 20 - 12\alpha .
$$
The integrality and nonnegativity of $[5\ 5\ 5]$
now gives $\alpha \le \lfloor 5/3 \rfloor = 1$.
\end{proof}

\noindent
{\bf Note.} It can also be shown
with a method similar to the one used in Lemma~\ref{lem:5554:222}
that the graph $[\Sigma_5(u)]_2$ for a vertex $u \in V\Sigma$
(i.e., the graph of vertices at distance $5$ from a vertex $u$,
with adjacency corresponding to distance $2$ in $\Sigma$)
is strongly regular with parameters
$(v, k, \lambda, \mu) = (210, 99, 48, 45)$.
A strongly regular graph with such parameters
has been constructed by M.~Klin~\cite{kprwz10}.

\medskip

\begin{theorem} \label{thm:5554:nonex}
A distance-regular graph $\Sigma$ with intersection array \eqref{eqn:5554:ia}
does not exist.
\end{theorem}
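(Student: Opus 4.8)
The plan is to rule out $\Sigma$ by exhibiting a single forbidden local configuration, namely three vertices that are pairwise at distance $2$ and yet have two common neighbours; this collides head-on with Lemma~\ref{lem:5554:222}, so almost all of the work is already done once that lemma is in hand.

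Concretely, I would start by choosing any two vertices $u, v$ of $\Sigma$ with $\partial(u,v) = 2$; such a pair exists because $\Sigma$ is connected of diameter $5$ (equivalently $k_2 = p^0_{22} = 594 > 0$). Since $\Sigma$ is distance-regular, $u$ and $v$ have exactly $p^2_{11} = c_2 = 5$ common neighbours, and because $5 \ge 3$ I can single out three distinct ones among them, say $w_1, w_2, w_3$. The decisive observation is that this triple is pairwise at distance $2$: each pair $w_i, w_j$ shares the common neighbour $u$, so $\partial(w_i, w_j) \le 2$, while bipartiteness forces $\partial(w_i, w_j)$ to be even (the $w_i$ all lie in the part opposite to $u$), hence exactly $2$.

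Lemma~\ref{lem:5554:222} therefore applies to the triple $(w_1, w_2, w_3)$ and yields $\sV{w_1 & w_2 & w_3}{1 & 1 & 1} \le 1$, i.e.\ $w_1, w_2, w_3$ have at most one common neighbour. But by construction both $u$ and $v$ are adjacent to all three of $w_1, w_2, w_3$, and $u \neq v$, so $(w_1, w_2, w_3)$ has at least the two distinct common neighbours $u$ and $v$, giving $\sV{w_1 & w_2 & w_3}{1 & 1 & 1} \ge 2$. This contradiction shows that $\Sigma$ cannot exist.

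I do not anticipate a genuine obstacle in this step: the entire difficulty has been pushed into Lemma~\ref{lem:5554:222} (which in turn rests on the vanishing Krein parameters $q^3_{11}, q^4_{11}$ and the integrality of $[5\ 5\ 5] = 20 - 12\alpha$). The only things to verify carefully are that $c_2 = 5$ is at least $3$, so that three common neighbours can actually be chosen, and that bipartiteness really does pin the chosen triple to mutual distance $2$; both are immediate from the intersection array \eqref{eqn:5554:ia} and the fact that $\Sigma$ is bipartite.
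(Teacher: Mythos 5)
Your proof is correct, and it reaches the contradiction with Lemma~\ref{lem:5554:222} by a different route than the paper. The paper keeps the original pair $u,v$ at distance $2$ and double-counts the $p^2_{11}(k-2) = 5\cdot 53 = 265$ edges between the set of common neighbours $\{1\ 1\}$ and the set $\{2\ 2\}$ of size $p^2_{22}=243$; since $265>243$, some $w\in\{2\ 2\}$ has two neighbours in $\{1\ 1\}$, so the triple $(u,v,w)$ violates the lemma. You instead take the triple to be three of the $c_2=5$ common neighbours of $u$ and $v$: bipartiteness (no triangles) forces them to be pairwise at distance $2$, and $u,v$ are already two distinct common neighbours of that triple, so the lemma is violated directly. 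Your version is the more elementary and more robust one --- it needs only $c_2\ge 3$ and triangle-freeness rather than the specific numerical inequality $p^2_{11}(k-2) > p^2_{22}$, so it would survive with different intersection numbers --- while the paper's counting argument is the one naturally suggested by the distance partition in Figure~\ref{fig:dp22} and could in principle bite even when $c_2 = 2$. All the auxiliary facts you flag ($c_2 = 5 \ge 3$, evenness of distances between vertices in the same bipartition class) check out, so there is no gap.
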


\begin{proof}
Let $u$ and $v$ be vertices of $\Sigma$ at distance $2$,
see Figure~\ref{fig:dp22},
and let $\{i\ j\}$ denote the set of vertices
at distances $i$ and $j$ from $u$ and $v$, respectively.
There are $p_{11}^2 (k-2) = 5 \cdot 53 = 265$ edges
between the sets $\{1\ 1\}$ and $\{2\ 2\}$.
However, the cardinality of the latter set is $p^2_{22} = 243 < 265$,
so there is a vertex $w \in \{2\ 2\}$
that has at least two neighbours in $\{1\ 1\}$,
i.e., $\sV{u & v & w}{1 & 1 & 1} \ge 2$,
which is in contradiction with Lemma~\ref{lem:5554:222}.
Hence, the graph $\Sigma$ does not exist.
\end{proof}

\section*{Acknowledgements}
I would like to thank Michael Lang
for bringing the intersection array \eqref{eqn:5554:ia} to my attention
and for noticing some bugs and proposing new functionality
for the {\tt sage-drg} package.

\begin{appendices}
\section{Description of the {\tt sage-drg} package} \label{app:pack}
\appsection{Installation}
\label{sapp:install}

The {\tt sage-drg} package~\cite{v18} can be installed by
cloning the {\tt git} repository or extracting the ZIP file
and making sure that Sage sees the {\tt drg} directory
(e.g., by starting it from the package's root directory,
or by copying or linking the {\tt drg} directory into the package library
used by the copy of Python in Sage's installation directory).
Once Sage is run, the package can be imported.

\smallskip
\begin{python}
sage: import drg
\end{python}

The central class is \pythoninline{drg.DRGParameters},
which can be given an intersection array
in the form of two lists or tuples of the same length.

\smallskip
\begin{python}
sage: syl = drg.DRGParameters([5, 4, 2], [1, 1, 4])
sage: syl
Parameters of a distance-regular graph with intersection array
{5, 4, 2; 1, 1, 4}
\end{python}
Instead of an intersection array,
parameters $(k, \lambda, \mu)$ for a strongly regular graph
or classical parameters $(d, b, \alpha, \beta)$ (see~\cite[\S6]{bcn89})
may also be specified.

\smallskip
\begin{python}
sage: petersen = drg.DRGParameters(3, 0, 1)
sage: petersen
Parameters of a distance-regular graph with intersection array {3, 2; 1, 1}
sage: q7 = drg.DRGParameters(7, 1, 0, 1)
sage: q7
Parameters of a distance-regular graph with intersection array
{7, 6, 5, 4, 3, 2, 1; 1, 2, 3, 4, 5, 6, 7}
\end{python}
The intersection array (given in any of the forms above)
may also contain variables.
Substitution of variables is possible using the \pythoninline{subs} method.
Note that the diameter must be constant.

\smallskip
\begin{python}
sage: r = var("r")
sage: fam = drg.DRGParameters([2*r^2*(2*r+1), (2*r-1)*(2*r^2+r+1), 2*r^2],
                              [1, 2*r^2 , r*(4*r^2-1)])
sage: fam1 = fam.subs(r == 1)
sage: fam1
Parameters of a distance-regular graph with intersection array
{6, 4, 2; 1, 2, 3}
\end{python}

\appsection{Parameter computation}
\label{sapp:computation}

As a \pythoninline{drg.DRGParameters} object is being constructed,
its intersection numbers are computed.
If any of them is determined to be negative or nonintegral,\footnote{
Non-constant expressions are also checked for nonnegativity
-- if there is no assignment of real numbers to the variables
such that the value of the expression is nonnegative,
then the expression is marked as negative
and cannot appear as, e.g., an intersection number.}
then an exception is thrown and the construction is aborted.
Several other conditions are also checked,
for example that the sequences $\{b_i\}_{i=0}^d$ and $\{c_i\}_{i=0}^d$
are non-ascending and non-descending, respectively,
and that $b_j \ge c_i$ if $i+j \le d$~\cite[Prop.~4.1.6(ii)]{bcn89}.
The handshake lemma is also checked for each subconstituent~%
\cite[Lem.~4.3.1]{bcn89}.
If the graph is determined to be antipodal,
then the covering index is also checked for integrality.

The number of vertices, their valency and the diameter of the graph
can be obtained with the \pythoninline{order}, \pythoninline{valency},
and \pythoninline{diameter} methods.

\smallskip
\begin{python}
sage: syl.order()
36
sage: syl.valency()
5
sage: syl.diameter()
3
\end{python}

The entire array of intersection numbers can be obtained
with the \pythoninline{pTable} method,
which returns a \pythoninline{drg.Array3D} object
implementing a three-dimensional array.

\smallskip
{\setstretch{0.9}
\begin{python}
sage: syl.pTable()
0: [ 1  0  0  0]
   [ 0  5  0  0]
   [ 0  0 20  0]
   [ 0  0  0 10]

1: [0 1 0 0]
   [1 0 4 0]
   [0 4 8 8]
   [0 0 8 2]

2: [ 0  0  1  0]
   [ 0  1  2  2]
   [ 1  2 11  6]
   [ 0  2  6  2]

3: [ 0  0  0  1]
   [ 0  0  4  1]
   [ 0  4 12  4]
   [ 1  1  4  4]
\end{python}
}
The subsets of intersection numbers
$\{a_i\}_{i=1}^d$, $\{b_i\}_{i=0}^{d-1}$, $\{c_i\}_{i=1}^d$,
and $\{k_i\}_{i=0}^d$
(where $k_i = p^0_{ii}$ is the number of vertices
at distance $i$ from any vertex of the graph)
can be obtained as tuples with the \pythoninline{aTable},
\pythoninline{bTable}, \pythoninline{cTable},
and \pythoninline{kTable} methods, respectively.
There is also a method \pythoninline{intersectionArray}
returning the entire intersection array as a pair of tuples.

\smallskip
\begin{python}
sage: syl.aTable()
(0, 2, 1)
sage: syl.bTable()
(5, 4, 2)
sage: syl.cTable()
(1, 1, 4)
sage: syl.kTable()
(1, 5, 20, 10)
sage: syl.intersectionArray()
((5, 4, 2), (1, 1, 4))
\end{python}

Eigenvalues can be computed using the \pythoninline{eigenvalues} method.

\smallskip
\begin{python}
sage: syl.eigenvalues()
(5, 2, -1, -3)
\end{python}
Eigenvalues are sorted in the decreasing order;
if there is a variable in the intersection array,
then the order is derived under the assumption that the variable takes
a (large enough) positive value.
If there is more than one variable,
a warning is issued that the given ordering is not necessarily correct.
This can be avoided by explicitly specifying an order of the variables
using the \pythoninline{set_vars} method
(see the 
\href{https://github.com/jaanos/sage-drg/blob/master/jupyter/DRG-d3-2param.ipynb}{\tt jupyter/DRG-d3-2param.ipynb}
notebook for an example).
The ordering of eigenvalues
(and thus the ordering of corresponding parameters)
can also be changed later using the \pythoninline{reorderEigenvalues} method,
which accepts an ordering of the indices of the nontrivial eigenvalues
(i.e., integers from $1$ to $d$).

Once the ordering of eigenvalues is determined,
the cosine sequences and multiplicities of the eigenvalues can be computed
using the \pythoninline{cosineSequences}
and \pythoninline{multiplicities} methods.
The multiplicities are checked to be integral.

\smallskip
\begin{python}
sage: syl.cosineSequences()
[    1     1     1     1]
[    1   2/5 -1/20  -1/5]
[    1  -1/5  -1/5   2/5]
[    1  -3/5   1/5  -1/5]
sage: syl.multiplicities()
(1, 16, 10, 9)
\end{python}

The eigenmatrix and dual eigenmatrix can be computed using
the \pythoninline{eigenmatrix} and \pythoninline{dualEigenmatrix} methods.
The \pythoninline{is_formallySelfDual} method checks whether
the two matrices are equal (using the current ordering of eigenvalues).

\smallskip
\begin{python}
sage: syl.eigenmatrix()
[ 1  5 20 10]
[ 1  2 -1 -2]
[ 1 -1 -4  4]
[ 1 -3  4 -2]
sage: syl.dualEigenmatrix()
[    1    16    10     9]
[    1  32/5    -2 -27/5]
[    1  -4/5    -2   9/5]
[    1 -16/5     4  -9/5]
sage: syl.is_formallySelfDual()
False
\end{python}

The Krein parameters can be computed
using the \pythoninline{kreinParameters} method,
which returns a \pythoninline{drg.Array3D} object.
The Krein parameters are checked to be nonnegative.

\smallskip
{\setstretch{0.9}
\begin{python}
sage: syl.kreinParameters()
0: [ 1  0  0  0]
   [ 0 16  0  0]
   [ 0  0 10  0]
   [ 0  0  0  9]

1: [   0    1    0    0]
   [   1 44/5 22/5  9/5]
   [   0 22/5    2 18/5]
   [   0  9/5 18/5 18/5]

2: [     0      0      1      0]
   [     0 176/25   16/5 144/25]
   [     1   16/5      4    9/5]
   [     0 144/25    9/5  36/25]

3: [   0    0    0    1]
   [   0 16/5 32/5 32/5]
   [   0 32/5    2  8/5]
   [   1 32/5  8/5    0]
\end{python}
}

Classical parameters can be computed
using the \pythoninline{is_classical} method,
which returns a list of all tuples of classical parameters,
or \pythoninline{False} if the graph is not classical.

\smallskip
\begin{python}
sage: fam1.is_classical()
[(3, 1, 0, 2)]
\end{python}

The method \pythoninline{genPoly_parameters} returns the tuple $(g, s, t)$
if the parameters correspond to those of a collinearity graph
of a generalized $g$-gon of order $(s, t)$,
or \pythoninline{(False, None, None)}
if there is no such generalized $g$-gon.
See~\cite{t59} and~\cite[\S6.5]{bcn89}
for definitions of generalized polygons and some results.
\\[3mm]
\begin{minipage}{\linewidth}
\begin{python}
sage: drg.DRGParameters([6, 4, 4], [1, 1, 3]).genPoly_parameters()
(6, 2, 2)
\end{python}
\end{minipage}
Note that the existence of a strongly regular graph
for which $(g, s, t)$ are defined
does not imply the existence of a corresponding generalized quadrangle.
A distance-regular graph $\G$ of diameter at least $3$
has these parameters defined precisely when
$\G$ is isomorphic to the collinearity graph
of a corresponding generalized $g$-gon.

All the methods mentioned above store their results,
so subsequent calls will not redo the computations.
Note that one does not need to call the methods in the order given here
-- if some required computation has not been done before,
it will be performed when needed.
Where applicable, the methods above also take three named boolean parameters
\pythoninline{expand}, \pythoninline{factor}, and \pythoninline{simplify}
(all set to \pythoninline{False} by default),
which control how the returned expression(s) will be manipulated.
In the case when there are no variables in use,
setting these parameters has no effect.

\appsection{Parameters of derived graphs}
\label{sapp:derived}

In some cases,
the parameters of a distance-regular graph
imply the existence of another distance-regular graph
which can be derived from the original graph.
This is true for imprimitive graphs (i.e., antipodal or bipartite),
but sometimes, new distance-regular graphs
can be obtained by taking subgraphs or by merging classes.

The antipodality of a graph can be checked
with the \pythoninline{is_antipodal} method,
which returns the covering index for antipodal graphs,
and \pythoninline{False} otherwise.
The parameters of the antipodal quotient of an antipodal graph
can then be obtained with the \pythoninline{antipodalQuotient} method.

\smallskip
\begin{python}
sage: q7.is_antipodal()
2
sage: q7.antipodalQuotient()
Parameters of a distance-regular graph with intersection array
{7, 6, 5; 1, 2, 3}
\end{python}

The bipartiteness of a graph can be checked
with the \pythoninline{is_bipartite} method.
The parameters of the bipartite half of a bipartite graph
can then be obtained with the \pythoninline{bipartiteHalf} method.

\smallskip
\begin{python}
sage: q7.is_bipartite()
True
sage: q7.bipartiteHalf()
Parameters of a distance-regular graph with intersection array
{21, 10, 3; 1, 6, 15}
\end{python}

In some cases, distance-regularity of the local graph can be established
(for instance, for tight graphs, see~\cite{jkt00}).
In these cases, the parameters of the local graph
can then be obtained with the \pythoninline{localGraph} method.
\\[3mm]
\begin{minipage}{\linewidth}
\begin{python}
sage: drg.DRGParameters([27, 10, 1], [1, 10, 27]).localGraph()
Parameters of a distance-regular graph with intersection array
{16, 5; 1, 8}
\end{python}
\end{minipage}

Similarly, the distance-regularity of a subconstituent
(i.e., a graph induced by vertices at a given distance from a vertex)
can be established in certain cases.
Their parameters can be obtained
using the \pythoninline{subconstituent} method.
Usually, distance-regularity is derived from triple intersection numbers
(see \appSection~\ref{sapp:triple}),
which are not computed by default.
To force this computation, the parameter \pythoninline{compute}
can be set to \pythoninline{True}.

\smallskip
\begin{python}
sage: drg.DRGParameters([204, 175, 48, 1],
                        [1, 12, 175, 204]).subconstituent(2, compute = True)
Parameters of a distance-regular graph with intersection array
{144, 125, 32, 1; 1, 8, 125, 144}
\end{python}
Note that calling \pythoninline{localGraph()}
is equivalent to calling \pythoninline{subconstituent(1)}.
The \pythoninline{localGraph} method
also accepts the \pythoninline{compute} parameter.

The complement of a strongly regular graph is also strongly regular.
If the complement is connected,
its parameters can be obtained
with the \pythoninline{complementaryGraph} method.

\smallskip
\begin{python}
sage: petersen.complementaryGraph()
Parameters of a distance-regular graph with intersection array {6, 2; 1, 4}
\end{python}

Sometimes,
merging classes of the underlying association scheme
yields a new distance-regular graph.
Its parameters
(or the parameters of a connected component
if the resulting graph is disconnected)
can be obtained with the \pythoninline{mergeClasses} method,
which takes the indices of classes
which will be merged into the first class of the new scheme
(i.e., the distances in the original graph
which will correspond to adjacency in the new graph).

\smallskip
\begin{python}
sage: q7.mergeClasses(2, 3, 6)
Parameters of a distance-regular graph with intersection array
{63, 30, 1; 1, 30, 63}
\end{python}
Note that \pythoninline{mergeClasses(2)}
gives the parameters of the bipartite half for bipartite graphs,
and of the complement for non-antipodal strongly regular graphs.

A dictionary mapping the merged indices to parameters of a new graphs
for all possibilities
can be obtained using the \pythoninline{distanceGraphs} method.

\smallskip
{\setstretch{0.9}
\begin{python}
sage: q7.distanceGraphs()
{(1, 2): Parameters of a distance-regular graph with intersection array
         {28, 15, 6, 1; 1, 6, 15, 28},
 (1, 2, 3, 4, 5, 6): Parameters of a distance-regular graph
                     with intersection array {126, 1; 1, 126},
 (1, 3, 5): Parameters of a distance-regular graph with intersection array
            {63, 62, 1; 1, 62, 63},
 (1, 3, 5, 7): Parameters of a distance-regular graph
               with intersection array {64, 63; 1, 64},
 (1, 4, 5): Parameters of a distance-regular graph with intersection array
            {63, 32, 1; 1, 32, 63},
 (1, 5): Parameters of a distance-regular graph with intersection array
        {28, 27, 16; 1, 12, 28},
 (1, 7): Parameters of a distance-regular graph with intersection array
         {8, 7, 6, 5; 1, 2, 3, 8},
 (2,): Parameters of a distance-regular graph with intersection array
       {21, 10, 3; 1, 6, 15},
 (2, 3, 6): Parameters of a distance-regular graph with intersection array
            {63, 30, 1; 1, 30, 63},
 (2, 4, 6): Parameters of a distance-regular graph with intersection array
            {63; 1},
 (2, 6): Parameters of a distance-regular graph with intersection array
         {28, 15; 1, 12},
 (3, 7): Parameters of a distance-regular graph with intersection array
         {36, 35, 16; 1, 20, 36},
 (4,): Parameters of a distance-regular graph with intersection array
       {35, 16; 1, 20},
 (6,): Parameters of a distance-regular graph with intersection array
       {7, 6, 5; 1, 2, 3},
 (7,): Parameters of a distance-regular graph with intersection array
       {1; 1}}
\end{python}
}

\appsection{Feasibility checking}
\label{sapp:feasibility}

To check whether a given parameter set is feasible,
the \pythoninline{check_feasible} method may be called.
This method calls other \pythoninline{check_*} methods
which perform the actual checks.
Selected checks may also be skipped
by providing a parameter \pythoninline{skip}
with a list of strings identifying checks to be skipped.

\underline{\pythoninline{sporadic}}.
The \pythoninline{check_sporadic} method checks
whether the intersection array matches one from a list of intersection arrays
for which nonexistence of a corresponding graph has been proven,
but does not belong to any infinite family.
If so, the parameter set is reported as infeasible.
Currently, the list includes:
\begin{itemize}
\itemsep 0pt
\item $\{14, 12; 1, 4\}$, cf.~Wilbrink and Brouwer~\cite{wb83},
\item $\{16, 12; 1, 6\}$, cf.~Bussemaker et al.~\cite{bhmw89},
\item $\{21, 18; 1, 7\}$, cf.~Haemers~\cite{h93},
\item $\{30, 21; 1, 14\}$, cf.~Bondarenko, Prymak and Radchenko~\cite{bpr17},
\item $\{32, 21; 1, 16\}$, cf.~Azarija and Marc~\cite{am18},
\item $\{38, 27; 1, 18\}$, cf.~Degraer~\cite{d07},
\item $\{40, 27; 1, 20\}$, cf.~Azarija and Marc~\cite{am16},
\item $\{57, 56; 1, 12\}$, cf.~Gavrilyuk and Makhnev~\cite{gm05},
\item $\{67, 56; 1, 2\}$, cf.~Brouwer and Neumaier~\cite{bn81},
\item $\{116, 115; 1, 20\}$, cf.~Makhnev~\cite{m17},
\item $\{153, 120; 1, 60\}$, cf.~Bondarenko et al.~\cite{bmprv18},
\item $\{165, 128; 1, 66\}$, cf.~Makhnev~\cite{m02},
\item $\{486, 320; 1, 243\}$, cf.~Makhnev~\cite{m02},
\item $\{5, 4, 3; 1, 1, 2\}$, cf.~Fon-Der-Flaass~\cite{fdf93b},
\item $\{11, 10, 10; 1, 1, 11\}$ (projective plane of order $10$),
    cf.~Lam, Thiel and Swiercz~\cite{lts89},
\item $\{13, 10, 7; 1, 2, 7\}$, cf.~Coolsaet~\cite{c95},
\item $\{18, 12, 1; 1, 2, 18\}$
    (generalized quadrangle of order $(6, 3)$ minus a spread),
    cf.~\cite[Prop.~12.5.2]{bcn89} and~\cite[6.2.2]{pt09},
\item $\{20, 10, 10; 1, 1, 2\}$ (projective plane of order $10$),
    cf.~Lam, Thiel and Swiercz~\cite{lts89},
\item $\{21, 16, 8; 1, 4, 14\}$, cf.~Coolsaet~\cite{c05},
\item $\{22, 16, 5; 1, 2, 20\}$, cf.~Sumalroj and Worawannotai~\cite{sw16},
\item $\{27, 20, 10; 1, 2, 18\}$,
    cf.~Brouwer, Sumalroj and Worawannotai~\cite{bsw16},
\item $\{36, 28, 4; 1, 2, 24\}$,
    cf.~Brouwer, Sumalroj and Worawannotai~\cite{bsw16},
\item $\{39, 24, 1; 1, 4, 39\}$, cf.~Bang, Gavrilyuk and Koolen~\cite{bgk18},
\item $\{45, 30, 7; 1, 2, 27\}$, cf.~Gavrilyuk and Makhnev~\cite{gm13},
\item $\{52, 35, 16; 1, 4, 28\}$, cf.~Gavrilyuk and Makhnev~\cite{gm12},
\item $\{55, 36, 11; 1, 4, 45\}$, cf.~Gavrilyuk~\cite{g11},
\item $\{56, 36, 9; 1, 3, 48\}$, cf.~Gavrilyuk~\cite{g11},
\item $\{69, 48, 24; 1, 4, 46\}$, cf.~Gavrilyuk and Makhnev~\cite{gm12},
\item $\{74, 54, 15; 1, 9, 60\}$, cf.~Coolsaet and Jurišić~\cite{cj08},
\item $\{105, 102, 99; 1, 2, 35\}$, cf.~De Bruyn and Vanhove~\cite{dbv15},
\item $\{130, 96, 18; 1, 12, 117\}$, cf.~Jurišić and Vidali~\cite{jv17},
\item $\{135, 128, 16; 1, 16, 120\}$, see \appref{thm:135:nonex}{Theorem}{4},
\item $\{234, 165, 12; 1, 30, 198\}$, see \appref{thm:234:nonex}{Theorem}{5},
\item $\{4818, 4248, 192; 1, 72, 4672\}$, cf.~Jurišić and Vidali~\cite{jv17},
\item $\{5928, 5920, 5888; 1, 5, 741\}$,
    cf.~De Bruyn and Vanhove~\cite{dbv15},
\item $\{120939612, 120939520, 120933632; 1, 65, 1314561\}$,
    cf.~De Bruyn and Vanhove~\cite{dbv15},
\item $\{97571175, 97571080, 97569275; 1, 20, 1027065\}$,
    cf.~De Bruyn and Vanhove~\cite{dbv15},
\item $\{290116365, 290116260, 290100825; 1, 148, 2763013\}$,
    cf.~De Bruyn and Vanhove~\cite{dbv15},
\item $\{5, 4, 3, 3; 1, 1, 1, 2\}$, cf.~Fon-Der-Flaass~\cite{fdf93a},
\item $\{10, 9, 1, 1; 1, 1, 9, 10\}$, cf.~\cite[Prop.~11.4.5]{bcn89},
\item $\{32, 27, 6, 1; 1, 6, 27, 32\}$, cf.~Soicher~\cite{s17},
\item $\{32, 27, 9, 1; 1, 3, 27, 32\}$, cf.~Soicher~\cite{s17},
\item $\{56, 45, 20, 1; 1, 4, 45, 56\}$, cf.~\cite[Prop.~11.4.5]{bcn89},
\item $\{55, 54, 50, 35, 10; 1, 5, 20, 45, 55\}$,
    see \appref{thm:5554:nonex}{Theorem}{7}, and
\item $\{15, 14, 12, 6, 1, 1; 1, 1, 3, 12, 14, 15\}$,
    cf.~Ivanov and Shpectorov~\cite{is90}.
\end{itemize}

\underline{\pythoninline{family}}.
The \pythoninline{check_family} method checks
whether the intersection array matches one from a list
of infinite families of intersection arrays
for which nonexistence of corresponding graphs has been proven.
If so, the parameter set is reported as infeasible.
Currently, the list includes:
\begin{itemize}
\itemsep 0pt
\item $\{r^2 (r+3), (r+1)(r^2+2r-2); 1, r(r+1)\}$ with $r \ge 3$, $r \ne 4$,
    cf.~Bondarenko and Radchenko~\cite{br13},
\item $\{(2r^2 - 1)(2r+1), 4r(r^2-1), 2r^2; 1, 2(r^2-1), r(4r^2-2)\}$
    with $r \ge 2$, cf.~Jurišić and Vidali~\cite{jv12},
\item $\{2r^2 (2r+1), (2r-1)(2r^2+r+1), 2r^2; 1, 2r^2, r(4r^2-1)\}$
    with $r \ge 2$, cf.~Jurišić and Vidali~\cite{jv12},
\item $\{4r^3 + 8r^2 + 6r + 1, 2r(r+1)(2r+1), 2r^2 + 2r + 1;
         1, 2r(r+1), (2r+1)(2r^2+2r+1)\}$ with $r \ge 1$,
    cf.~Coolsaet and Jurišić~\cite{cj08},
\item $\{(2r\!+\!1)(4r\!+\!1)(4t\!-\!1),
         8r(4rt\!-\!r\!+\!2t), (r\!+\!t)(4r\!+\!1);
         1, (r\!+\!t)(4r\!+\!1), 4r(2r\!+\!1)(4t\!-\!1)\}$ with $r, t \ge 1$,
    see \appref{thm:2param:nonex}{Theorem}{3},
\item $\{(r+1)(r^3-1), r(r-1)(r^2+r-1), r^2-1;
         1, r(r+1), (r^2-1)(r^2+r-1)\}$ with $r \ge 3$,
    cf.~Urlep~\cite{u12},
\item $\{r^2 (rt+t+1), (r^2-1)(rt+1), r(r-1)(t+1), 1;
         1, r(t+1), (r^2-1)(rt+1), r^2 (rt+t+1)\}$
    with $r \ge 3$ and $(r, t) \ne (3, 1), (3, 3), (4, 2)$,
    cf.~Jurišić and Koolen~\cite{jk11}, and
\item $\{2r^2+r, 2r^2+r-1, r^2, r, 1; 1, r, r^2, 2r^2+r-1, 2r^2+r\}$
    with $r \ge 2$, cf.~Coolsaet, Jurišić and Koolen~\cite{cjk08}.
\end{itemize}

\underline{\pythoninline{2graph}}.
The \pythoninline{check_2graph} method
checks conditions related to two-graphs,
cf.~\cite[Thm.~1.5.6]{bcn89}.
For strongly regular graphs with parameters $(v, k, \lambda, \mu)$,
for which $v = 2(2k - \lambda - \mu)$ holds,
it records the parameters $(v-1, 2(k-\mu), k+\lambda-2\mu, k-\mu)$
of a strongly regular graph to be checked for feasibility later.
For Taylor graphs (i.e., antipodal double covers of diameter $3$)
for which $a_1 > 0$ holds,
it checks whether $a_1$ is even
and whether the number of vertices $n$ is a multiple of $4$,
and then records the parameters $(k, a_1, (3a_1-k-1)/2, a_1/2)$
of a strongly regular graph as those af the local graph
to be checked for feasibility later, cf.~\cite[Thm.~1.5.3]{bcn89}.

\underline{\pythoninline{classical}}.
The \pythoninline{check_classical} method checks
whether any of the classical parameters for the parameter set
match some from a list of infinite families of classical paramters
for which nonexistence of corresponding graphs has been proven.
If so, the parameter set is reported as infeasible.
Currently, the list only includes two sets of classical parameters:
\begin{itemize}
\itemsep 0pt
\item $(d, b, \alpha, \beta) = (d, -2, -2, ((-2)^{d+1} - 1)/3)$
    with $d \ge 4$, cf. Huang, Pan and Weng~\cite{hpw15}, and
\item $(d, b, \alpha, \beta) =
    (d, -r, -r/(r-1), r + r^2 ((-r)^{d-1} - 1) / (r^2-1))$
    with $d \ge 4$, $r \ge 2$,
    cf.~De Bruyn and Vanhove~\cite{dbv15}.
\end{itemize}
Additionally, the method checks whether nonexistence
can be derived from one of the following characterizations:
\begin{itemize}
\itemsep 0pt
\item a characterization of Grassmann graphs by Metsch~\cite[Thm.~2.3]{m95},
\item a characterization of bilinear forms graphs
    by Metsch~\cite[Prop.~2.2]{m99},
\item a characterization of graphs
    with classical parameters $(d, b, \alpha, \beta)$ and $d \ge 4$, $b < 0$
    by Weng~\cite[Thm.~10.3]{w99}, and
\item a characterization of graphs
    with classical parameters $(d, b, \alpha, \beta)$
    and $d \ge 3$, $a_1 = 0$, $a_2 > 0$
    by Pan and Weng~\cite[Thm.~2.1]{pw09}.
\end{itemize}

\underline{\pythoninline{combinatorial}}.
The \pythoninline{check_combinatorial} method
checks various combinatorial conditions:
\begin{itemize}
\itemsep 0pt
\item a graph with $b_1 = 1$ must be a cycle or a cocktail party graph,
\item Godsil's diameter bound~\cite[Lem.~5.3.1]{bcn89},
\item a lower bound for $c_3$~\cite[Thm.~5.4.1]{bcn89},
\item a condition for $b_1 = b_i$~\cite[Prop.~5.4.4]{bcn89},
\item a lower bound for $a_1$~\cite[Prop.~5.5.1]{bcn89},
\item a handshake lemma for Pappus subgraphs, cf.~Koolen~\cite{k92}
\item Turán's theorem~\cite[Lem.~5.6.4]{bcn89},
\item a counting argument by Lambeck~\cite{l93},
\item two lower bounds for the size of the last subconstituent~%
    \cite[Props.~5.6.1,~5.6.3]{bcn89},
\item handshake lemmas for the numbers of edges and triangles~%
    \cite[Lem.~4.3.1]{bcn89}, and
\item a condition for $p^2_{dd} = 0$~\cite[Prop.~5.7.1]{bcn89}.
\end{itemize}
Additionally,
the method checks whether a condition
for the existence of cliques of size $a_1 + 2$
is satisified~\cite[Prop.~4.3.2]{bcn89}
and performs additional checks:
\begin{itemize}
\itemsep 0pt
\item a divisibility check
    if the last subconstituent is a union of cliques~%
    \cite[Prop.~4.3.2(ii)]{bcn89},
\item a handshake lemma for maximal cliques~\cite[Prop.~4.3.3]{bcn89}, and
\item two inequalities from counting arguments~\cite[Prop.~4.3.3]{bcn89}.
\end{itemize}

\underline{\pythoninline{conference}}.
For a strongly regular graph with parameters $(v, k, \lambda, \mu)$,
where $k = 2\mu$ and $\mu = k-\lambda-1$,
the \pythoninline{check_conference} method
checks whether $n \not\equiv 1 \pmod{4}$ and $n$ is a sum of two squares,
cf.~\cite[\S1.3]{bcn89}.

\underline{\pythoninline{geodeticEmbedding}}.
For a distance-regular graph with intersection array
$\{2b, b, 1; 1, 1,$ $2b\}$,
the \pythoninline{check_geodeticEmbedding} method
checks whether $b \le 4$, cf.~\cite[Prop.~1.17.3]{bcn89}.

\underline{\pythoninline{2design}}.
For a distance-regular graph with intersection array
$\{r \mu + 1, (r-1) \mu, 1; 1, \mu,$ $r \mu + 1\}$,
the \pythoninline{check_2design} method
checks that a corresponding $2$-design exists,
cf.~\cite[Prop. 1.10.5]{bcn89}.

\underline{\pythoninline{hadamard}}.
For a distance-regular graph with intersection array
$\{2\mu, 2\mu - 1, \mu, 1; 1, \mu,$ $2\mu - 1, 2\mu\}$ with $\mu > 1$,
the \pythoninline{check_hadamard} method
checks whether $\mu$ is even,
i.e., whether a Hadamard matrix of order $2\mu$ can exist,
cf.~\cite[Cor.~1.8.2]{bcn89}.

\underline{\pythoninline{antipodal}}.
For an antipodal cover of even diameter at least $4$,
the \pythoninline{check_antipodal} method
checks whether its quotient satisfies necessary conditions
for the existence of a cover, cf.~\cite[Prop.~4.2.7]{bcn89}.

\underline{\pythoninline{genPoly}}.
The \pythoninline{check_genPoly} method
checks conditions related to generalized polygons.
First, it checks whether the conditions
for the existence of cliques of size $a_1 + 2$ have been checked,
and calls \pythoninline{check_combinatorial} otherwise
to obtain this information.
If the existence of cliques has been established
and the intersection array matches that of a collinearity graph
of a generalized $g$-gon with parameter $(s, t)$,
then we know that we should indeed have such a graph,
and the following conditions are checked:
\begin{itemize}
\itemsep 0pt
\item Feit-Higman theorem~\cite[Thm.~6.5.1]{bcn89},
\item $(s+t) | st(s+1)(t+1)$ for generalized quadrangles~\cite[1.2.2]{pt09},
    and
\item Bruck-Ryser theorem for thin generalized hexagons~%
    \cite[Thm.~1.10.4]{bcn89}.
\end{itemize}

An antipodal distance-regular $r$-cover of diameter $3$
with $k = (r-1)(c_2+1)$
such that the existence of cliques of size $a_1 + 2$ has been established
corresponds to the collinearity graph
of a generalized quadrangle of order $(s, t) = (r-1, c_2+1)$
with a spread removed.
Therefore, in this case,
\pythoninline{check_genPoly} checks for feasibility
of such a generalized quadrangle (first two conditions above),
and also checks whether it can contain a spread,
see~\cite[Prop.~12.5.2]{bcn89} and~\cite[1.8.3]{pt09}.

\underline{\pythoninline{clawBound}}.
For a strongly regular graph,
the \pythoninline{check_clawBound} method checks the claw bound,
i.e., whether the graph must be the point graph
of an infeasible partial geometry,
cf.~Brouwer and Van Lint~\cite{bvl84}.

\underline{\pythoninline{terwilliger}}.
The \pythoninline{check_terwilliger} method
checks conditions related to Terwilliger graphs and induced quadrangles,
cf.~\cite[\S1.16]{bcn89}.
First, it checks the coclique bound by Koolen and Park~\cite[Thm.~3]{kp10};
if it is met with equality,
it checks wheter the graph can be a Terwilliger graph~%
\cite[Cor.~1.16.6]{bcn89}.
If the parameters imply that the graph contains an induced quadrangle,
then Terwilliger's diameter bound is checked~\cite[Thm.~5.2.1]{bcn89}.

\underline{\pythoninline{secondEigenvalue}}.
For a distance-regular graph with an eigenvalue equal to $b_1-1$,
the \pythoninline{check_secondEigenvalue} method
checks whether the graph belongs to the classification
given in~\cite[Thm.~4.4.11]{bcn89}.

\underline{\pythoninline{localEigenvalue}}.
The \pythoninline{check_localEigenvalue} method
checks conditions related to the eigenvalues of the local graph.
For a distance-regular graph of diameter $3$ that is not the dodecahedron,
the following conditions are checked:
\begin{itemize}
\itemsep 0pt
\item general bounds for the second largest and smallest eigenvalue
    of the local graph~\cite[Thm.~4.4.3]{bcn89},
\item a bound on eigenvalues of the local graph of a non-bipartite graph
    by Jurišić and Koolen~\cite{jk00},
\item the fundamental bound by Jurišić, Koolen and Terwilliger~\cite{jkt00},
    and
\item bounds on multiplicities of eigenvalues,
    see~\cite[Thm.~4.4.4]{bcn89},~\cite{gh92} and~\cite{gk95}.
\end{itemize}

If equality is met in the bounds of the second or third point above,
then the local graph is determined to be strongly regular
and is thus stored as such to be checked for feasibility later.

\underline{\pythoninline{absoluteBound}}.
The \pythoninline{check_absoluteBound} method
checks the absolute bound on the multiplicities of eigenvalues~%
\cite[Thm.~2.3.3]{bcn89}.

\bigskip
After running all the checks described above,
the \pythoninline{check_feasbile} method
calls itself on all already derived graphs
(antipodal quotient, bipartite half, complement, $2$-graph derivation,
subconstituents where applicable),
and then also on each parameter set for distance-regular graphs
obtained by merging classes.
To avoid repetitions,
a list of checked intersection arrays is maintained.
This step can be skipped
by setting the \pythoninline{derived} parameter to \pythoninline{False}.

If the parameter set is feasible (i.e., it passes all checks),
then \pythoninline{check_feasible} returns without error.
Otherwise, a \pythoninline{drg.InfeasibleError} exception is thrown
indicating the reason for nonexistence and providing a reference.

\smallskip
\begin{python}
sage: drg.DRGParameters(266, 220, 210).check_feasible()
...
InfeasibleError: complement: nonexistence by GavrilyukMakhnev05
\end{python}
Details on the given references
are available in the \pythoninline{drg.references} submodule.

\smallskip
\begin{python}
sage: import drg.references
sage: drg.references.refs["GavrilyukMakhnev05"]
{'authors': [('Gavrilyuk', ('Alexander', 'L.')),
             ('Makhnev', ('Alexander', 'Alexeevich'))],
 'fjournal': 'Doklady Akademii Nauk',
 'journal': 'Dokl. Akad. Nauk',
 'number': 6,
 'pages': (727, 730),
 'title': 'Krein graphs without triangles',
 'type': 'article',
 'volume': 403,
 'year': 2005}
\end{python}
Details on the nonexistence may also be extracted from the exception.

\smallskip
\begin{python}
sage: try:
....:     drg.DRGParameters([65, 44, 11], [1, 4, 55]).check_feasible()
....: except drg.InfeasibleError as ex:
....:     print("Part: 
....:     print("Reason: 
....:     for r, thm in ex.refs:
....:         ref = drg.references.refs[r]
....:         print("Authors: 
....:         print("Title: 
....:         print("Theorem: 
....:
Part: ()
Reason: coclique bound exceeded
Authors: [('Koolen', ('Jack', 'H.')), ('Park', ('Jongyook',))]
Title: Shilla distance-regular graphs
Theorem: Thm. 3.
\end{python}

\appsection{Partitions and triple intersection numbers}
\label{sapp:triple}

For a given parameter set,
the distance partition corresponding to a vertex
can be obtained with the \pythoninline{distancePartition} method,
which returns a graph representing the distance partition.

\smallskip
\begin{python}[frame=t]
sage: dp = syl.distancePartition()
sage: dp
Distance partition of {5, 4, 2; 1, 1, 4}
sage: dp.show()
\end{python}
\includegraphics[width=0.5\textwidth]{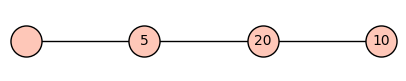}
\hrule\bigskip

A distance partition corresponding to two vertices can be obtained
by passing the distance between them
as an argument to \pythoninline{distancePartition}.

\smallskip
\begin{python}[frame=t]
sage: syl.distancePartition(1).show()
\end{python}
\includegraphics[width=0.5\textwidth]{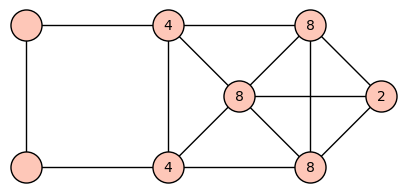}
\hrule\bigskip
Note that edges are shown between any pair of cells
such that their distances from either of the initial vertices
differ by at most $1$.
To show all the distance partitions corresponding to at most two vertices,
the \pythoninline{show_distancePartitions} method may be used (see below).

For a given triple of distances $(U, V, W)$ such that $p^W_{UV} > 0$,
the method \pythoninline{tripleEquations} gives the solution
to the system of equations \appeqref{eqn:triple}{1}
augmented by equations derived from \appref{thm:krein0}{Theorem}{1}
for each triple $(i, j, h)$ ($1 \le i, j, h \le d$) such that $q^h_{ij} = 0$.
The solution is returned as a \pythoninline{drg.Array3D} object.

\smallskip
{\setstretch{0.8}
\begin{python}
sage: syl.tripleEquations(1, 1, 2)
0: [0 0 0 0]
   [0 1 0 0]
   [0 0 0 0]
   [0 0 0 0]

1: [0 0 1 0]
   [0 0 0 0]
   [1 0 3 0]
   [0 0 0 0]

2: [0 0 0 0]
   [0 0 2 2]
   [0 2 2 4]
   [0 2 4 2]

3: [0 0 0 0]
   [0 0 0 0]
   [0 0 6 2]
   [0 0 2 0]
\end{python}
}

If the solution is not unique,
one or more parameters will be present in the solution.

\smallskip
{\setstretch{0.8}
\begin{python}
sage: syl.tripleEquations(1, 2, 3)
0: [0 0 0 0]
   [0 0 1 0]
   [0 0 0 0]
   [0 0 0 0]

1: [0 0 0 1]
   [0 0 0 0]
   [0 1 2 1]
   [0 0 0 0]

2: [        0         0         0         0]
   [        0         0         3         1]
   [        0        r2    r2 + 4 -2*r2 + 4]
   [        1   -r2 + 2   -r2 + 4  2*r2 + 1]

3: [        0         0         0         0]
   [        0         0         0         0]
   [        0   -r2 + 3   -r2 + 6  2*r2 - 1]
   [        0    r2 - 1        r2 -2*r2 + 3]
\end{python}
}

Parameters may also be set explicitly
by passing a \pythoninline{params} argument
with a dictionary mapping the name of the parameter
to the triple of distances it represents.

\smallskip
{\setstretch{0.8}
\begin{python}
sage: syl.tripleEquations(1, 3, 3, params = {"a": (3, 3, 3)})
0: [0 0 0 0]
   [0 0 0 1]
   [0 0 0 0]
   [0 0 0 0]

1: [0 0 0 1]
   [0 0 0 0]
   [0 0 4 0]
   [0 0 0 0]

2: [         0          0          0          0]
   [         0          0          4          0]
   [         0 -1/2*a + 4 -1/2*a + 4          a]
   [         0      1/2*a  1/2*a + 4     -a + 4]

3: [         0          0          0          0]
   [         0          0          0          0]
   [         0      1/2*a  1/2*a + 4     -a + 4]
   [         1 -1/2*a + 1     -1/2*a          a]
\end{python}
}

If the appropriate triple intersection number is computed to be zero,
an edge will not be shown in the distance partition.

\smallskip
\begin{python}[frame=t]
sage: syl.show_distancePartitions()
\end{python}
\begin{minipage}{0.49\textwidth}
\includegraphics[width=\textwidth]{syl-dp.png}
\end{minipage}
\begin{minipage}{0.49\textwidth}
\includegraphics[width=\textwidth]{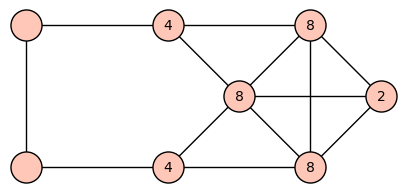}
\end{minipage} \\
\begin{minipage}{0.49\textwidth}
\includegraphics[width=\textwidth]{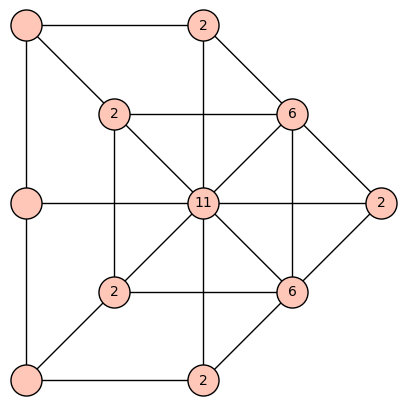}
\end{minipage}
\begin{minipage}{0.49\textwidth}
\centering
\includegraphics[width=0.56\textwidth]{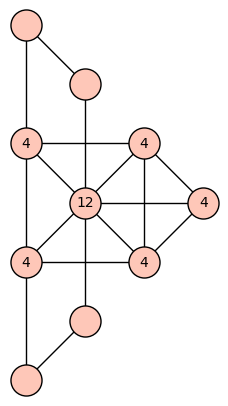}
\end{minipage}
\hrule\bigskip

\end{appendices}


\begin{thebibliography}{10}

\bibitem{am16}
J.~Azarija and T.~Marc.
\newblock There is no (95,40,12,20) strongly regular graph, 2016.
\newblock \arxiv{1603.02032}.

\bibitem{am18}
J.~Azarija and T.~Marc.
\newblock There is no (75,32,10,16) strongly regular graph.
\newblock {\em Linear Algebra Appl.}, 557:62--83, 2018.
\newblock \doi{10.1016/j.laa.2018.07.019}.

\bibitem{bgk18}
S.~Bang, A.~L. Gavrilyuk, and J.~H. Koolen.
\newblock Distance-regular graphs without $4$-claws.
\newblock {\em European J. Combin.}, 2018.
\newblock \doi{10.1016/j.ejc.2018.02.022}.

\bibitem{b71}
N.~L. Biggs.
\newblock Intersection matrices for linear graphs.
\newblock In D.~J.~A. Welsh, editor, {\em Combinatorial Mathematics and its
  applications}, pages 15--23. Academic Press, London, 1971.

\bibitem{bmprv18}
A.~V. Bondarenko, A.~Mellit, A.~V. Prymak, D.~V. Radchenko, and M.~S.
  Viazovska.
\newblock There is no strongly regular graph with parameters (460, 153, 32,
  60).
\newblock In {\em Contemporary computational mathematics -- a celebration of
  the 80th birthday of Ian Sloan}, volume 1--2, pages 131--134. Springer, Cham,
  2018.
\newblock \doi{10.1007/978-3-319-72456-0\_7}.

\bibitem{bpr17}
A.~V. Bondarenko, A.~V. Prymak, and D.~V. Radchenko.
\newblock Non-existence of $(76,30,8,14)$ strongly regular graph, 2017.
\newblock \doi{10.1016/j.laa.2017.03.033}.

\bibitem{br13}
A.~V. Bondarenko and D.~V. Radchenko.
\newblock On a family of strongly regular graphs with $\lambda = 1$.
\newblock {\em J. Combin. Theory Ser. B}, 103(4):521--531, 2013.
\newblock \doi{10.1016/j.jctb.2013.05.005}.

\bibitem{b11}
A.~E. Brouwer.
\newblock Parameters of distance-regular graphs, 2011.
\newblock \url{http://www.win.tue.nl/~aeb/drg/drgtables.html}.

\bibitem{bcn89}
A.~E. Brouwer, A.~M. Cohen, and A.~Neumaier.
\newblock {\em Distance-regular graphs}, volume~18 of {\em Ergebnisse der
  Mathematik und ihrer Grenzgebiete (3) [Results in Mathematics and Related
  Areas (3)]}.
\newblock Springer-Verlag, Berlin, 1989.
\newblock \doi{10.1007/978-3-642-74341-2}.

\bibitem{bcn94}
A.~E. Brouwer, A.~M. Cohen, and A.~Neumaier.
\newblock Corrections and additions to the book `{D}istance-regular graphs',
  1994.
\newblock \url{http://www.win.tue.nl/~aeb/drg/}.

\bibitem{bvl84}
A.~E. Brouwer and J.~H. {\noop{Lint}}van~Lint.
\newblock Strongly regular graphs and partial geometries.
\newblock In {\em Enumeration and design (Waterloo, Ont., 1982)}, pages
  85--122. Academic Press, Toronto, 1984.
\newblock \url{https://ir.cwi.nl/pub/1817/1817A.pdf}.

\bibitem{bn81}
A.~E. Brouwer and A.~Neumaier.
\newblock {\em Strongly regular graphs where mu equals two and lambda is
  large}.
\newblock Afdeling Zuivere Wiskunde [Department of Pure Mathematics]. Stichting
  Mathematisch Centrum, Amsterdam, 1981.
\newblock \url{https://ir.cwi.nl/pub/6792/6792A.pdf}.

\bibitem{bsw16}
A.~E. Brouwer, S.~Sumalroj, and C.~Worawannotai.
\newblock The nonexistence of distance-regular graphs with intersection arrays
  $\{27,20,10;1,2,18\}$ and $\{36,28,4;1,2,24\}$.
\newblock {\em Australas. J. Combin.}, 66:330--332, 2016.
\newblock \url{http://ajc.maths.uq.edu.au/pdf/66/ajc_v66_p330.pdf}.

\bibitem{bhmw89}
F.~C. Bussemaker, W.~H. Haemers, R.~A. Mathon, and H.~A. Wilbrink.
\newblock A $(49, 16, 3, 6)$ strongly regular graph does not exist.
\newblock {\em European J. Combin.}, 10(5):413--418, 1989.
\newblock \doi{10.1016/S0195-6698(89)80014-9}.

\bibitem{cgs78}
P.~J. Cameron, J.-M. Goethals, and J.~J. Seidel.
\newblock Strongly regular graphs having strongly regular subconstituents.
\newblock {\em J. Algebra}, 55(2):257--280, 1978.
\newblock \doi{10.1016/0021-8693(78)90220-X}.

\bibitem{c95}
K.~Coolsaet.
\newblock Local structure of graphs with $\lambda = \mu = 2$, $a_2 = 4$.
\newblock {\em Combinatorica}, 15(4):481--487, 1995.
\newblock \doi{10.1007/BF01192521}.

\bibitem{c05}
K.~Coolsaet.
\newblock A distance regular graph with intersection array $(21,16,8;$
  $1,4,14)$ does not exist.
\newblock {\em European J. Combin.}, 26(5):709--716, 2005.
\newblock \doi{10.1016/j.ejc.2004.04.005}.

\bibitem{cj08}
K.~Coolsaet and A.~Jurišić.
\newblock Using equality in the {K}rein conditions to prove non\-exist\-ence of
  certain distance-regular graphs.
\newblock {\em J. Combin. Theory Ser. A}, 115(6):1086--1095, 2008.
\newblock \doi{10.1016/j.jcta.2007.12.001}.

\bibitem{cjk08}
K.~Coolsaet, A.~Jurišić, and J.~Koolen.
\newblock On triangle-free distance-regular graphs with an eigenvalue
  multiplicity equal to the valency.
\newblock {\em European J. Combin.}, 29(5):1186--1199, 2008.
\newblock \doi{10.1016/j.ejc.2007.06.010}.

\bibitem{vdkt16}
E.~R. {\noop{Dam}}van~Dam, J.~H. Koolen, and H.~Tanaka.
\newblock Distance-regular graphs.
\newblock {\em Electron. J. Combin.}, DS:22, 2016.
\newblock
  \url{http://www.combinatorics.org/ojs/index.php/eljc/article/view/DS22}.

\bibitem{dbv15}
B.~De~Bruyn and F.~Vanhove.
\newblock On {$Q$}-polynomial regular near $2d$-gons.
\newblock {\em Combinatorica}, 35(2):181--208, 2015.
\newblock \doi{10.1007/s00493-014-3039-x}.

\bibitem{d07}
J.~Degraer.
\newblock {\em Isomorph-free exhaustive generation algorithms for association
  schemes}.
\newblock PhD thesis, Ghent University, 2007.
\newblock \url{https://cage.ugent.be/geometry/Theses/52/degraer-phd.pdf}.

\bibitem{fdf93a}
D.~G. Fon-Der-Flaass.
\newblock A distance-regular graph with intersection array $(5, 4, 3, 3; 1, 1,
  1, 2)$ does not exist.
\newblock {\em J. Algebraic Combin.}, 2(1):49--56, 1993.
\newblock \doi{10.1023/A:1022476614402}.

\bibitem{fdf93b}
D.~G. Fon-Der-Flaass.
\newblock There exists no distance-regular graph with intersection array $(5,
  4, 3; 1, 1, 2)$.
\newblock {\em European J. Combin.}, 14(5):409--412, 1993.
\newblock \doi{10.1006/eujc.1993.1045}.

\bibitem{g11}
A.~L. Gavrilyuk.
\newblock Distance-regular graphs with intersection arrays $\{55, 36, 11; 1, 4,
  45\}$ and $\{56, 36, 9; 1, 3, 48\}$ do not exist.
\newblock {\em Dokl. Akad. Nauk}, 439(1):14--17, 2011.
\newblock \doi{10.1134/S1064562411040028}.

\bibitem{gm05}
A.~L. Gavrilyuk and A.~A. Makhnev.
\newblock Krein graphs without triangles.
\newblock {\em Dokl. Akad. Nauk}, 403(6):727--730, 2005.

\bibitem{gm12}
A.~L. Gavrilyuk and A.~A. Makhnev.
\newblock Distance-regular graphs with intersection arrays $\{52, 35, 16; 1, 4,
  28\}$ and $\{69, 48, 24; 1, 4, 46\}$ do not exist.
\newblock {\em Des. Codes Cryptogr.}, 65(1--2):49--54, 2012.
\newblock \doi{10.1007/s10623-012-9695-1}.

\bibitem{gm13}
A.~L. Gavrilyuk and A.~A. Makhnev.
\newblock A distance-regular graph with intersection array $\{45, 30, 7; 1, 2,
  27\}$ does not exist.
\newblock {\em Diskret. Mat}, 25(2):13--30, 2013.

\bibitem{gh92}
C.~D. Godsil and A.~D. Hensel.
\newblock Distance regular covers of the complete graph.
\newblock {\em J. Combin. Theory Ser. B}, 56(2):205--238, 1992.
\newblock \doi{10.1016/0095-8956(92)90019-T}.

\bibitem{gk95}
C.~D. Godsil and J.~H. Koolen.
\newblock On the multiplicity of eigenvalues of distance-regular graphs.
\newblock {\em Linear Algebra Appl.}, 226--228:273--275, 1995.
\newblock \doi{10.1016/0024-3795(95)00152-H}.

\bibitem{h93}
W.~H. Haemers.
\newblock There exists no $(76, 21, 2, 7)$ strongly regular graph.
\newblock In {\em Finite geometry and combinatorics}, London Mathematical
  Society Lecture Note Series, pages 175--176. Cambridge Univ. Press,
  Cambridge, 1993.
\newblock \doi{10.1017/CBO9780511526336.018}.

\bibitem{hpw15}
Y.~Huang, Y.~Pan, and C.~Weng.
\newblock Nonexistence of a class of distance-regular graphs.
\newblock {\em Electron. J. Combin.}, 22(2):2.37, 2015.
\newblock
  \url{http://www.combinatorics.org/ojs/index.php/eljc/article/view/v22i2p37}.

\bibitem{is90}
A.~A. Ivanov and S.~V. Shpectorov.
\newblock The {$P$}-geometry for {$M_{23}$} has no nontrivial $2$-coverings.
\newblock {\em European J. Combin.}, 11(4):373--379, 1990.
\newblock \doi{10.1016/S0195-6698(13)80139-4}.

\bibitem{jk00}
A.~Jurišić and J.~Koolen.
\newblock Nonexistence of some antipodal distance-regular graphs of diameter
  four.
\newblock {\em European J. Combin.}, 21(8):1039--1046, 2000.

\bibitem{jk11}
A.~Jurišić and J.~Koolen.
\newblock Classification of the family $\operatorname{AT4}(qs,q,q)$ of
  antipodal tight graphs.
\newblock {\em J. Combin. Theory Ser. A}, 118(3):842--852, 2011.

\bibitem{jkt00}
A.~Jurišić, J.~Koolen, and P.~Terwilliger.
\newblock Tight distance-regular graphs.
\newblock {\em J. Algebraic Combin.}, 12(2):163--197, 2000.
\newblock \doi{10.1023/A:1026544111089}.

\bibitem{jv12}
A.~Jurišić and J.~Vidali.
\newblock Extremal $1$-codes in distance-regular graphs of dia\-meter~$3$.
\newblock {\em Des. Codes Cryptogr.}, 65(1--2):29--47, 2012.
\newblock \doi{10.1007/s10623-012-9651-0}.

\bibitem{jv17}
A.~Jurišić and J.~Vidali.
\newblock Restrictions on classical distance-regular graphs.
\newblock {\em J. Algebraic Combin.}, 46(3--4):571--588, 2017.
\newblock \doi{10.1007/s10801-017-0765-3}.

\bibitem{kprwz10}
M.~Klin, C.~Pech, S.~Reichard, A.~Woldar, and M.~Ziv-Av.
\newblock Examples of computer experimentation in algebraic combinatorics.
\newblock {\em Ars Math. Contemp.}, 3(2):238--248, 2010.
\newblock \url{http://amc-journal.eu/index.php/amc/article/view/119}.

\bibitem{k92}
J.~H. Koolen.
\newblock A new condition for distance-regular graphs.
\newblock {\em European J. Combin.}, 13(1):63--64, 1992.
\newblock \doi{10.1016/0195-6698(92)90068-B}.

\bibitem{kp10}
J.~H. Koolen and J.~Park.
\newblock Shilla distance-regular graphs.
\newblock {\em European J. Combin.}, 31(8):2064--2073, 2010.
\newblock \doi{10.1016/j.ejc.2010.05.012}.

\bibitem{lts89}
C.~W.~H. Lam, L.~Thiel, and S.~Swiercz.
\newblock The nonexistence of finite projective planes of order $10$.
\newblock {\em Canad. J. Math.}, 41(6):1117--1123, 1989.
\newblock \doi{10.4153/CJM-1989-049-4}.

\bibitem{l93}
E.~Lambeck.
\newblock Some elementary inequalities for distance-regular graphs.
\newblock {\em European J. Combin.}, 14(1):53--54, 1993.
\newblock \doi{10.1006/eujc.1993.1008}.

\bibitem{l14}
M.~S. Lang.
\newblock Bipartite distance-regular graphs: the {$Q$}-polynomial property and
  pseudo primitive idempotents.
\newblock {\em Discrete Math.}, 331:27--35, 2014.
\newblock \doi{10.1016/j.disc.2014.04.025}.

\bibitem{mlt06}
M.~S. MacLean and P.~Terwilliger.
\newblock Taut distance-regular graphs and the subconstituent algebra.
\newblock {\em Discrete Math.}, 306(15):1694--1721, 2006.
\newblock \doi{10.1016/j.disc.2006.03.046}.

\bibitem{m02}
A.~A. Makhnev.
\newblock On the nonexistence of strongly regular graphs with the parameters
  $(486, 165, 36, 66)$.
\newblock {\em Ukrain. Mat. Zh.}, 54(7):941--949, 2002.
\newblock \doi{10.1023/A:1022066425998}.

\bibitem{m17}
A.~A. Makhnev.
\newblock The graph {K}re$(4)$ does not exist.
\newblock {\em Dokl. Math.}, 96(1):348--350, 2017.
\newblock \doi{10.1134/S1064562417040123}.

\bibitem{maxima17}
Maxima.
\newblock Maxima, a computer algebra system. version 5.39.0, 2017.
\newblock \url{http://maxima.sourceforge.net/}.

\bibitem{m95}
K.~Metsch.
\newblock A characterization of {G}rassmann graphs.
\newblock {\em European J. Combin.}, 16(6):639--644, 1995.
\newblock \doi{10.1016/0195-6698(95)90045-4}.

\bibitem{m99}
K.~Metsch.
\newblock On a characterization of bilinear forms graphs.
\newblock {\em European J. Combin.}, 20(4):293--306, 1999.
\newblock \doi{10.1006/eujc.1998.0280}.

\bibitem{pw09}
Y.~Pan and C.~Weng.
\newblock A note on triangle-free distance-regular graphs with $a_2 \ne 0$.
\newblock {\em J. Combin. Theory Ser. B}, 99(1):266--270, 2009.
\newblock \doi{10.1016/j.jctb.2008.07.005}.

\bibitem{pt09}
S.~E. Payne and J.~A. Thas.
\newblock {\em Finite generalized quadrangles}.
\newblock EMS Series of Lectures in Mathematics. European Mathematical Society
  (EMS), Z\"urich, second edition, 2009.
\newblock \doi{10.4171/066}.

\bibitem{p17}
{Python Software Foundation}.
\newblock {\em {P}ython {L}anguage {R}eference, version 2.7.13}, 2017.
\newblock \url{http://www.python.org}.

\bibitem{sage17}
{\noop{Sage}}{The Sage Developers}.
\newblock {\em {S}age{M}ath, the {S}age {M}athematics {S}oftware {S}ystem
  ({V}ersion 7.6)}, 2017.
\newblock \url{http://www.sagemath.org}.

\bibitem{s17}
L.~H. Soicher.
\newblock The uniqueness of a distance-regular graph with intersection array
  $\{32, 27, 8, 1; 1, 4, 27, 32\}$ and related results.
\newblock {\em Des. Codes Cryptogr.}, 84(1--2):101--108, 2017.
\newblock \doi{10.1007/s10623-016-0223-6}.

\bibitem{sw16}
S.~Sumalroj and C.~Worawannotai.
\newblock The nonexistence of a distance-regular graph with intersection array
  $\{22,16,5;1,2,20\}$.
\newblock {\em Electron. J. Combin}, 23(1):1.32, 2016.
\newblock
  \url{http://www.combinatorics.org/ojs/index.php/eljc/article/view/v23i1p32}.

\bibitem{t59}
J.~Tits.
\newblock Sur la trialit\'{e} et certains groupes qui s'en d\'{e}duisent.
\newblock {\em Inst. Hautes Études Sci. Publ. Math.}, (2):13--60, 1959.
\newblock \url{http://www.numdam.org/item?id=PMIHES_1959__2__13_0}.

\bibitem{u12}
M.~Urlep.
\newblock Triple intersection numbers of {$Q$}-polynomial distance-regular
  graphs.
\newblock {\em European J. Combin.}, 33(6):1246--1252, 2012.
\newblock \doi{10.1016/j.ejc.2012.02.005}.

\bibitem{v13}
J.~Vidali.
\newblock {\em Codes in distance-regular graphs}.
\newblock PhD thesis, University of Ljubljana, 2013.
\newblock \url{http://eprints.fri.uni-lj.si/2167/} (in Slovene).

\bibitem{v18}
J.~Vidali.
\newblock {\tt jaanos/sage-drg}: {\tt sage-drg} {S}age package v0.8, 2018.
\newblock \url{https://github.com/jaanos/sage-drg/},
  \doi{10.5281/zenodo.1418410}.

\bibitem{w99}
C.~Weng.
\newblock Classical distance-regular graphs of negative type.
\newblock {\em J. Combin. Theory Ser. B}, 76(1):93--116, 1999.
\newblock \doi{10.1006/jctb.1998.1892}.

\bibitem{wb83}
H.~A. Wilbrink and A.~E. Brouwer.
\newblock A $(57, 14, 1)$ strongly regular graph does not exist.
\newblock {\em Nederl. Akad. Wetensch. Indag. Math.}, 45(1):117--121, 1983.

\bibitem{w17a}
J.~S. Williford.
\newblock Primitive $3$-class {Q}-polynomial association schemes, 2017.
\newblock Mirrored version available at
  \url{https://jaanos.github.io/tables/qpoly/qprim3_table.html}.

\bibitem{w10}
{Wolfram Research, Inc.}
\newblock {\em Mathematica, Version 8.0}.
\newblock Champaign, Illinois, 2010.
\newblock \url{http://www.wolfram.com/mathematica/}.

\end{thebibliography}
\end{document}